\newcommand{\Universal}{$\mathcal{U}$}
\newtheorem{definition}{Definition}[]
\newtheorem{proposition}[definition]{Proposition}
\newtheorem{lemma}[definition]{Lemma}
\newtheorem{theorem}[definition]{Theorem}
\newtheorem{remark}[definition]{Remark}
\newtheorem{corollary}[definition]{Corollary}
\newcommand{\IMPR}{$(\to_R)$}
\newcommand{\DISJL}{$(\vee_L)$}
\newcommand{\CONJR}{$(\wedge_R)$}
\newcommand{\AXL}{$(\text{Ax}_L)$}
\newcommand{\AXR}{$(\text{Ax}_R)$}
\newcommand{\CUT}{$(\text{Cut})$}
\newcommand{\MUT}{$(\tilde\mu)$}
\newcommand{\MU}{$(\mu)$}
\newcommand{\axc}[1]{\AxiomC{$#1$}}
\newcommand{\uic}[2]{\RightLabel{\scriptsize#2}\UnaryInfC{$#1$}}
\newcommand{\bic}[2]{\RightLabel{\scriptsize#2}\BinaryInfC{$#1$}}
\newcommand{\LKMMT}{$\text{LK}_{\mu\tilde\mu}$}
\newcommand{\explodes}{\Vdash_\bot}
\newcommand{\sVd}{\Vdash_{\!\!\! s}}
\newcommand{\To}{\Longrightarrow}
\newcommand{\Tocom}{\,\,\,\Longrightarrow\,\,\,}
\begin{document}

\begin{frontmatter}

\title{Kripke Models for Classical Logic}

\author[danko]{Danko Ilik}
\author[gyesik]{Gyesik Lee\corref{thanks}}
\author[hugo]{Hugo Herbelin}

\address[danko]{École Polytechnique. Address: INRIA PI.R2, 23 avenue
  d'Italie, CS 81321, 75214 Paris Cedex 13, France\\ E-mail:
  danko.ilik@polytechnique.edu}

\address[gyesik]{ROSAEC Center. Address: Bldg 138, Seoul National
  University, 171-742 Seoul, Korea.\\ E-mail: gslee@ropas.snu.ac.kr}

\address[hugo]{INRIA. Address: INRIA PI.R2, 23 avenue d'Italie, CS
  81321, 75214 Paris Cedex 13, France.\\ E-mail:
  hugo.herbelin@inria.fr}

\cortext[thanks]{For Gyesik Lee, this work was partially supported by
  the Engineering Research Center of Excellence Program of Korea
  Ministry of Education, Science and Technology(MEST) / Korea Science
  and Engineering Foundation(KOSEF), grant number
  R11-2008-007-01002-0.}

\begin{abstract}
  We introduce a notion of Kripke model for classical logic for which
  we constructively prove soundness and cut-free completeness. We
  discuss the novelty of the notion and its potential applications.
\end{abstract}

\begin{keyword}
  Kripke model \sep classical logic \sep sequent calculus \sep lambda
  mu calculus \sep classical realizability \sep normalization by
  evaluation
  
  \MSC 03F99 \sep 03H05 \sep 03B30 \sep 03B40
\end{keyword}

\end{frontmatter}

\section{Introduction}
Kripke models have been introduced as means of giving semantics to
modal logics and were later used to give semantics for intuitionistic
logic as well, cf. \cite{kripke59, kripke63}. The purpose of the present
paper is to show that Kripke models can also be used as semantics for
\emph{classical} logic.  Of course, Kripke semantics can be indirectly
assigned to classical logic by means of some appropriate
double-negation translation, as in \cite{DBLP:journals/jlp/Avigad01},
but our goal here is to provide a \emph{direct} presentation of a
notion of Kripke semantics for classical logic.

We will use the {\LKMMT} sequent calculus of \cite{CurienH00} to
represent proofs, but the conclusions given apply to any complete
formal system for classical logic. There are at least two reasons for
choosing \LKMMT: first, it is a typing system for a calculus very
close to $\lambda$-calculus and we are ultimately interested in the
computational content of classical logic; second, the symmetry of
left/right distinguished formulae of {\LKMMT} allows to give two dual
notions of models, of which only one needs to be, and is, presented in
this paper, while the other can be derived by analogy.

This paper is organised as follows. Section 2 introduces the notion of
classical Kripke model, based on two modifications to the traditional
notion, and discusses the relationship between the traditional and our
notion. Section 3 introduces the sequent calculus {\LKMMT} and gives a
soundness theorem for it. Section 4 proves a completeness theorem for
a universal model constructed from the deduction system
itself. Section 5 is the concluding section which discusses related
and future work.

We use the standard inductive definition of predicate logic formulae
for the connectives
$\{\top,\bot,\wedge,\vee,\to,\exists,\forall\}$. The language has
infinitely many constants. A sentence is a formula where all variables
are bound by quantifiers. An atomic formula is one which is not built
up from logical connectives, i.e. it is one built up of a predicate
symbol. The shorthand $\neg A$ stands for $A\to \bot$.

All statements and proofs are constructive.

\section{Classical Kripke Models}

Kripke models can be considered as the ``most classical'' of all the
semantics for intuitionistic logic, for two reasons: first, each of
the `possible worlds' that define a Kripke model is a classical world
in itself (where either an atom or its negation are true); second, it
is the single of the semantics for intuitionistic logic which has only
a classical proof of completeness, when disjunction and existential
quantification are considered.\footnote{ There is an intuitionistic
  proof in \cite{Veldman76}, but it makes use of the fan theorem which
  is not universally recognised as constructive.}

In the last two decades, the Curry-Howard correspondence between
intuitionistic proof systems and typed lambda-calculi has been
extended to classical proof systems
\cite{Griffin90,Parigot92,CurienH00}. The idea for introducing
direct-style Kripke models for classical logic came from their
usefulness in providing normalisation-by-evaluation for intuitionistic
proof systems \cite{cCoquand93,CCoquand2002}. To account for a
classical proof system we modify the traditional notion of Kripke
model in the following two ways.

\paragraph{Not taking the forcing relation as primitive} We take as
primitive the notion of ``strong refutation'', and define forcing in
terms of it.\footnote{For an alternative, see the discussion on dual
  models in Section 5.} The forcing definition we get in this way
partly coincides with the traditional definition of forcing, as
explained in subsection~\ref{traditional_forcing}.

\paragraph{Allowing certain nodes to validate absurdity} We allow
certain possible worlds to be marked as ``fallible'', or
``exploding''. This approach has been taken for Kripke models in
\cite{Veldman76}, for Beth models by Friedman \cite{Troelstra} and is
necessary in order to have a constructive proof of completeness, in
the view of the meta-mathematical results from \cite{Kreisel62,
  McCarty94, McCarty02}, which preclude constructive
proofs\footnote{Strictly speaking, the cited results show that having
  a constructive proof of completeness implies having a proof of
  Markov's Principle.} of completeness in case one wants to retain
that absurdity must never be valid in a possible
world\footnote{Extending the class of Boolean models with inconsistent
  models is also the key to the constructive proof of the classical
  completeness theorem in \cite{Krivine96}. For an analysis of that
  result, see \cite{BerardiV04}.}.

\begin{definition} A \emph{classical Kripke model} is given by a
  quintuple $(K, \le , D, \sVd, \Vdash_\bot)$, $K$ inhabited, such
  that
  \begin{itemize}
  \item $(K, \le)$ is a poset of ``possible worlds''; 

  \item $D$ is the ``domain function'' assigning sets to the
    elements of $K$ such that
    \[ \forall w, w' \in K, (w \le w' \Rightarrow D(w) \subseteq
    D(w'))\] i.e., $D$ is monotone;

    \noindent Let the language be extended with constant symbols for
    each element of ${\cal D} := \cup\{D(w) : w\in K \}$.

  \item $(-): (-)\sVd$ is a binary relation of ``strong refutation''
    between worlds and atomic sentences in the extended language such
    that
    \begin{itemize}
    \item $w : X(d_1, ..., d_n) \sVd\quad \Rightarrow\quad d_i \in
      D(w) $ \, for each $i \in \{1, ..., n \}$,
    \item (Monotonicity) $w : X(d_1, ..., d_n) \sVd\,\, \&\,\, w \le
      w' \,\, \Rightarrow\,\, w' : X(d_1,...,d_n) \sVd$,
    \end{itemize}
    
  \item $(-) \Vdash_\bot$ is a unary relation on worlds labelling a
    world as ``exploding'', which is also monotone:
    \[
    w\explodes \&~ w\le w' \Rightarrow w'\explodes.
    \]
\end{itemize}

\end{definition}
The strong refutation relation is extended from atomic to composite
sentences inductively and by mutually defining the relations of
\emph{forcing} and (non-strong) \emph{refutation}.

\begin{definition}\label{def-composite} 
  The relation $(-) : (-) \sVd$ of \emph{strong refutation is extended
    to} the relation between worlds $w$ and \emph{composite sentences}
  $A$ in the extended language with constants in $D(w)$, inductively,
  together with the two new relations:
  \begin{itemize}
  \item A sentence $A$ is \emph{forced} in the world $w$ (notation
    $w:\Vdash A$) if any world $w'\ge w$, which strongly refutes $A$,
    is exploding;

  \item A sentence $A$ is \emph{refuted} in the world $w$ (notation
    $w:A\Vdash$) if any world $w'\ge w$, which forces $A$, is
    exploding;
  \end{itemize}
  \begin{itemize}
  \item $w:A\wedge B\sVd$ if $w:A\Vdash$ or $w:B\Vdash$;
  \item $w:A\vee B\sVd$ if $w:A\Vdash$ and $w:B\Vdash$;
  \item $w:A\to B\sVd$ if $w:\Vdash A$ and $w:B\Vdash$;
  \item $w:\forall x. A(x)\sVd$ if $w:A(d)\Vdash$ for some $d\in D(w)$;
  \item $w:\exists x. A(x)\sVd$ if, for any $w'\ge w$ and $d \in
    D(w')$, $w':A(d)\Vdash$;
  \item $\bot$ is always strongly refuted;
  \item $\top$ is never strongly refuted.
  \end{itemize}
\end{definition}

The notions of forcing and refutation can be somewhat understood as
the classical notions of being true and being false. However, a
statement of form $P \Rightarrow w\Vdash_\bot$ should not be thought
of as negation of $P$ at the meta-level, because in the concrete model
we provide in section \ref{completeness}, $w\Vdash_\bot$ is always an
inhabited set. In other words, we never use \textit{ex falso
  quodlibet} at the meta-level to handle exploding nodes.

The notion of strong refutation is more informative than the notion of
(non-strong) refutation, not only because the former implies the
latter, but also because, for example, having $w:A\wedge B\sVd$ tells
us which one of $A, B$ is refuted, while $w:A\wedge B\Vdash$ does not.

A more detailed characterisation of the notions is given in the rest
of this section.

\begin{lemma}\label{monotone}
  Strong refutation, forcing and refutation are monotone in any
  classical Kripke model.
\end{lemma}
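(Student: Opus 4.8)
The plan is to separate the three relations rather than proving all of them by one simultaneous induction, exploiting the fact that forcing and refutation are each defined by a universal quantification over all larger worlds.

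First I would observe that monotonicity of forcing and of refutation holds for \emph{every} sentence $A$ with no induction at all. Indeed, suppose $w:\Vdash A$ and $w\le w''$; by definition $w'':\Vdash A$ asks that every $w'\ge w''$ which strongly refutes $A$ be exploding, and any such $w'$ satisfies $w'\ge w$ by transitivity of $\le$, so $w'\explodes$ follows at once from $w:\Vdash A$. The argument for refutation is identical, replacing ``strongly refutes'' by ``forces''. This step uses only transitivity of the order and the shape of the two definitions, not any property of the subformulae of $A$.

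With those two facts in hand, monotonicity of strong refutation then follows by a straightforward induction on the structure of $A$. The base cases are immediate: for atoms it is the monotonicity clause assumed in the definition of the model, $\bot$ is always strongly refuted, and $\top$ is vacuously monotone. For each connective I would unfold the defining clause of Definition~\ref{def-composite} and push the order up through it: e.g.\ if $w:A\wedge B\sVd$ then $w:A\Vdash$ or $w:B\Vdash$, and by the monotonicity of refutation just established this transfers to any $w'\ge w$; the cases of $\vee$ and $\to$ are handled the same way, using monotonicity of forcing and/or refutation of the immediate subformulae. The case of $\forall x.\,A(x)$ additionally needs monotonicity of the domain function, so that the witness $d\in D(w)$ with $w:A(d)\Vdash$ still lies in $D(w')$; and the case of $\exists x.\,A(x)$ is again manifestly monotone, since, like forcing and refutation, it is defined by a quantification over all worlds above $w$.

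The only real subtlety to flag is the apparent circularity in Definition~\ref{def-composite}: strong refutation of a composite $A$ is explained through forcing and refutation of its subformulae, whereas forcing and refutation of $A$ are explained through strong refutation of the \emph{same} $A$. This is not a genuine obstacle, because forcing and refutation never descend to subformulae, so the recursion on formula structure remains well founded; and, as the first step shows, their monotonicity is in fact available \emph{independently} of the induction. Hence the induction on $A$ is carried entirely by the strong-refutation clauses, and each inductive step reduces to the already-established monotonicity of forcing/refutation of proper subformulae together with monotonicity of $D$.
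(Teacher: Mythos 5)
Your proof is correct and follows essentially the same route as the paper's: monotonicity of forcing and refutation is immediate from their definitions (they quantify over all worlds above $w$, so transitivity of $\le$ suffices), while monotonicity of strong refutation is proved by induction on the formula, using the model's monotonicity assumptions for atoms and for $D$. Your additional remarks---that the forcing/refutation step needs no induction at all, and that the mutual recursion in Definition~\ref{def-composite} is well founded because forcing and refutation never descend to subformulae---are accurate elaborations of what the paper leaves implicit.
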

\begin{proof}
  The monotonicity of strong refutation can be proved by induction on
  the formula in question, while that of forcing and refutation is
  obviously true.
\end{proof}

\begin{lemma}\label{srefutes_refutes} 
  Strong refutation implies refutation: In any world $w$ and for any
  sentence $A$, $w:A\sVd$ implies $w:A\Vdash$.
\end{lemma}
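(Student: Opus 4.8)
The plan is to unfold the definition of refutation and reduce the claim to the definition of forcing, letting monotonicity of strong refutation (Lemma~\ref{monotone}) together with reflexivity of $\le$ do all the work. To establish $w:A\Vdash$ I must show that every world $w'\ge w$ which forces $A$ is exploding. So I would fix an arbitrary $w'\ge w$ satisfying $w':\Vdash A$ and aim to derive $w'\explodes$.

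First I would transport the hypothesis $w:A\sVd$ along $w\le w'$: by the monotonicity of strong refutation I obtain $w':A\sVd$. Next I would expand the assumption $w':\Vdash A$, which by definition says that every world above $w'$ that strongly refutes $A$ is exploding. Instantiating this universally quantified condition at $w'$ itself---legitimate because $\le$ is reflexive---and feeding it the fact $w':A\sVd$ just obtained yields precisely $w'\explodes$, which closes the case.

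There is no real obstacle here: the argument is essentially a two-line instantiation, and its only genuine input is the monotonicity of strong refutation, which I am entitled to assume from Lemma~\ref{monotone}. The single point worth keeping in mind, given the constructive reading the paper insists on, is that the proof never appeals to \emph{ex falso}: the conclusion $w'\explodes$ is produced positively, by applying the forcing hypothesis to the witness $w'$, rather than by deriving a contradiction, so the argument stays within the intended constructive discipline.
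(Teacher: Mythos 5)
Your proof is correct and is essentially the paper's own argument: both transport $w:A\sVd$ up to $w'$ by monotonicity (Lemma~\ref{monotone}) and then instantiate the definition of $w':\Vdash A$ at $w'$ itself via reflexivity of $\le$ to conclude $w'\explodes$. The paper merely compresses these two steps into one sentence, so there is nothing to add or repair.
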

\begin{proof} 
  Suppose $w :A \sVd$, $w' \ge w$ and $w' : \Vdash A$. Then $w'$ is
  exploding because $w' :A \sVd$ by monotonicity. Since $w'$ was
  arbitrary, $w :A \Vdash$.
\end{proof}

\subsection{Relation to Traditional Forcing and Further
  Properties}\label{traditional_forcing}
It is natural to ask what is the relationship between traditional
intuitionistic forcing\cite{Troelstra} and our forcing whose
definition relies on a more primitive notion.  Lemmas
\ref{forcing_comparison} and \ref{top-bottom} give that the two
notions (superficially) coincide on the fragment of formulae
constructed by $\{\to,\wedge,\forall,\top\}$

\begin{lemma}\label{forcing_comparison} The following statements hold.
  \begin{eqnarray}
    w:\Vdash A\to B & \Longleftrightarrow & \text{ for all } w'\ge w,
    w':\Vdash A \Rightarrow w':\Vdash B \label{imply}\\
    w:\Vdash A\land B & \Longleftrightarrow & w:\Vdash A \text{ and } w:\Vdash B \label{and}\\
    w:\Vdash \forall x.A(x) & \Longleftrightarrow & \text{ for all }
    w'\ge w \text{ and } d\in D(w'), w':\Vdash A(d) \label{forall}\\
    w :\Vdash A \lor B & \Longleftarrow & w:\Vdash A \text{ or } w :\Vdash B \\
    w:\Vdash \exists x.A(x) & \Longleftarrow & \text{ for some } d\in D(w), w:\Vdash A(d)
  \end{eqnarray}
\end{lemma}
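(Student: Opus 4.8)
The plan is to reduce every clause to manipulations of the unfolded definition of forcing. Recall that $w:\Vdash C$ means precisely that every $w'\ge w$ with $w':C\sVd$ satisfies $w'\explodes$. The single workhorse I would isolate first is a \emph{collapse step}: for any world $v$ and sentence $C$, if both $v:\Vdash C$ and $v:C\Vdash$ hold, then $v\explodes$. This is immediate by instantiating the universal quantifier hidden in $v:C\Vdash$ (``every $w''\ge v$ forcing $C$ explodes'') at $w''=v$ itself, using $v\ge v$ and $v:\Vdash C$. Together with monotonicity of forcing (Lemma~\ref{monotone}) and with Lemma~\ref{srefutes_refutes} (strong refutation implies refutation), this collapse step handles all five items.

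For (\ref{imply}), I would unfold $w:\Vdash A\to B$ to ``every $w'\ge w$ with $w':\Vdash A$ and $w':B\Vdash$ explodes''. For $\Leftarrow$, given the right-hand side and a witness $w'$ with $w':\Vdash A$ and $w':B\Vdash$, the hypothesis yields $w':\Vdash B$, and the collapse step at $w'$ gives $w'\explodes$. For $\Rightarrow$, given $w'\ge w$ with $w':\Vdash A$ and any $w''\ge w'$ strongly refuting $B$, monotonicity promotes $w':\Vdash A$ to $w'':\Vdash A$, Lemma~\ref{srefutes_refutes} turns $w'':B\sVd$ into $w'':B\Vdash$, hence $w'':A\to B\sVd$ and so $w''\explodes$. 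The (\ref{and}) and (\ref{forall}) clauses follow the same template: unfold forcing, unfold the strong-refutation clause of the connective, convert strong refutation of subformulas into refutation via Lemma~\ref{srefutes_refutes}, and apply monotonicity and the collapse step, together with the intuitionistically valid shuffles $\big((P\vee Q)\to R\big)\leftrightarrow\big((P\to R)\wedge(Q\to R)\big)$ and $\big((\exists d.\,P(d))\to R\big)\leftrightarrow\forall d.\,(P(d)\to R)$; monotonicity of $D$ supplies $d\in D(w'')$ from $d\in D(w')$ where needed in (\ref{forall}).

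The two remaining items are proved only in the stated $\Leftarrow$ direction. For disjunction, assuming (by symmetric case analysis) $w:\Vdash A$, any $w'\ge w$ strongly refuting $A\vee B$ in particular refutes $A$; monotonicity gives $w':\Vdash A$, and the collapse step gives $w'\explodes$. For the existential, from $d\in D(w)$ with $w:\Vdash A(d)$ and any $w'\ge w$ strongly refuting $\exists x.A(x)$, monotonicity of $D$ gives $d\in D(w')$, instantiating the strong-refutation clause at $w''=w'$ and this $d$ yields $w':A(d)\Vdash$, monotonicity promotes $w:\Vdash A(d)$ to $w':\Vdash A(d)$, and the collapse step again gives $w'\explodes$.

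I expect the only real subtlety---rather than a genuine obstacle---to be bookkeeping about \emph{which} direction survives for $\vee$ and $\exists$. The missing converses would require extracting a disjunct, respectively a witness, from the bare information that a node explodes, which is exactly what the exploding-node machinery is designed \emph{not} to allow; so these directions are not available constructively and the single arrows in the statement are sharp. Throughout I would check that every propositional and quantifier manipulation used is intuitionistically valid, so that the proof remains constructive as promised.
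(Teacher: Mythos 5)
Your proposal is correct and takes essentially the same route as the paper's proof: unfold the definition of forcing, use monotonicity (Lemma~\ref{monotone}) and Lemma~\ref{srefutes_refutes} to turn strong refutation of subformulas into refutation, and close each case by observing that a world which both forces and refutes a sentence explodes. The only difference is presentational: the paper leaves that final ``collapse'' step implicit (e.g.\ ``this is immediate, since $w':\Vdash B$ by assumption''), whereas you isolate it as an explicit lemma, which is harmless and arguably clarifying.
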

\begin{proof}
  Lemma \ref{monotone} and Lemma \ref{srefutes_refutes} are used
  implicitly in the following proof.
  \begin{itemize}
  \item[(\ref{imply})] Left-to-right: Suppose $w'\ge w$ and $w':\Vdash
    A$. To show $w':\Vdash B$ we let $w'' \ge w'$ and $w'':B\sVd$ and
    have to show that $w''$ is exploding. Since then $w'' : A \to B
    \sVd$ holds by monotonicity and Lemma \ref{srefutes_refutes}, the
    claim follows from the definition of $w:\Vdash A\to B$.

    Right-to-left: Suppose $w'\ge w$ and $w' :A\to B \sVd$, i.e.,
    $w':\Vdash A$ and $w':B\Vdash$. We have to show $w'$ is
    exploding. But, this is immediate, since $w' :\Vdash B$ by
    assumption.

  \item[(\ref{and})] Left-to-right: Suppose $w' \ge w$ and $w' :A
    \sVd$. Then $w' : A \Vdash$, and so $w' : A \land B \sVd$. This
    implies that $w'$ is exploding, that is, $w :\Vdash A$. Similarly,
    we can show $w :\Vdash B$.

    Right-to-left: Suppose $w' \ge w$ and $w' : A \land B
    \sVd$. Therefore we have $w' : A \Vdash$ or $w' : B \Vdash$. Each
    case leads to $w' :\Vdash_\bot$ since $w' :\Vdash A$ and $w'
    :\Vdash B$ by monotonicity.

  \item[(\ref{forall})] Left-to-right: Suppose $w''\ge w'\ge w$, $d
    \in D(w')$, and $w'' : A(d) \sVd$. Then $w'' :\forall x. A(x)
    \sVd$, so $w''$ is exploding.

    Right-to-left: Suppose $w' \ge w$ and $w' : \forall x. A(x) \sVd$,
    i.e., $w' : A(d) \Vdash$ for some $d \in D(w')$. So $w'$ is
    exploding by assumption.
  \end{itemize}
  The rest of the cases are obvious.
\end{proof}

Note, however, that although the definitions of our and intuitionistic
forcing ``match'' on the fragment $\{\to,\wedge,\forall,\top\}$, that
does not mean that a formula in that fragment is forced in our sense
if and only if it is forced in the intuitionistic sense. The law of
Peirce $((A\to B)\to A)\to A$ is one counterexample to that, it is
classically but not intuitionistically forced; this is so because in
our forcing, hidden under the surface, there is a notion of refutation
which can be used.

\begin{remark}\label{or-exists1}
  The following do not hold in general, even if reasoning classically.
  \begin{itemize}
  \item $ w :\Vdash A \lor B \Longrightarrow w:\Vdash A \text{ or } w
    :\Vdash B$.
  \item $ w:\Vdash \exists x.A(x) \Longrightarrow \text{ for some }
    t\in D(w), w:\Vdash A(t)$.
  \end{itemize}
  The explanation is deferred to Remark \ref{or-exists2}.
\end{remark}

\begin{lemma}\label{cbn-to-cbv}
  Given a classical Kripke model $\cal K$, the following hold.
  \begin{enumerate}
  \item $w : A \to B \Vdash$ \, iff $w : A \to B \sVd$. \label{cbn-to}

  \item $w : A \lor B \Vdash$ \, iff\, $w : A \lor B
    \sVd$.\label{cbn-or}

  \item $w: \exists x.A(x)\Vdash$ \, iff\, $w : \exists x. A(x)
    \sVd$. \label{9-6'}

  \item If $w : A \Vdash$ or $w : B \Vdash$, then $w : A \land B
    \Vdash$.\label{cbn-and}

  \item If $w : A(d) \Vdash$ for some $d \in D(w)$, then $w : \forall
    x.A(x) \Vdash$.\label{cbn-all}
  \end{enumerate}
\end{lemma}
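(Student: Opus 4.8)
The plan is to split the five claims into the two one-directional items (4 and 5), which are immediate, and the three biconditionals (1--3), which share a single mechanism. For item 4 the hypothesis ``$w:A\Vdash$ or $w:B\Vdash$'' is, by Definition \ref{def-composite}, literally the condition $w:A\wedge B\sVd$; likewise the hypothesis of item 5 is literally $w:\forall x.A(x)\sVd$. Hence in both cases Lemma \ref{srefutes_refutes} yields the stated conclusion at once. (These cannot be strengthened to biconditionals, precisely because strong refutation of $\wedge$ and $\forall$ records which conjunct, resp.\ which witness, is refuted.)

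For the biconditionals 1--3, the direction from $\sVd$ to $\Vdash$ is again just Lemma \ref{srefutes_refutes}, so all the work is in the converse. The engine for the converse is a pair of ``self-witnessing'' facts, each obtained by instantiating the universally quantified $w'\ge w$ of a defining clause at the world itself: (a) if $w:\Vdash A$ and $w:A\sVd$, then $w\explodes$; and (b) if $w:\Vdash A$ and $w:A\Vdash$, then $w\explodes$. Fact (a) comes from the definition of forcing, fact (b) from the definition of refutation.

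With (a) and (b) available, each converse follows the same schema: unfold $C\sVd$ into its defining clauses and prove each clause by passing to the relevant world $w'\ge w$, showing that $w'$ (for the existential, a further extension $w''$) actually \emph{forces} $C$, and then applying the hypothesis $w:C\Vdash$ to conclude that this world explodes. For item 1, to get $w:\Vdash A$ I would take $w'\ge w$ with $w':A\sVd$ and check $w':\Vdash A\to B$: any $w''\ge w'$ with $w'':A\to B\sVd$ satisfies $w'':\Vdash A$ and, by monotonicity (Lemma \ref{monotone}), $w'':A\sVd$, hence $w''\explodes$ by (a); and to get $w:B\Vdash$ I would take $w'\ge w$ with $w':\Vdash B$ and check $w':\Vdash A\to B$ using (b). Items 2 and 3 run identically, using (b) throughout; for the existential one additionally instantiates the inner quantifier of $\exists x.A(x)\sVd$ at the witness $d$ in play, invoking monotonicity of $D$ to keep $d$ in the domain.

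The only delicate point, and hence the expected obstacle, is this converse direction of 1--3: the step ``forcing a component entails forcing the whole $C$'' requires careful bookkeeping of the nested $\forall\,w'\ge w$ quantifiers hidden inside forcing and refutation, together with monotonicity of strong refutation, with facts (a)/(b) doing the decisive work of collapsing a world that both forces and refutes (resp.\ strongly refutes) a formula into an exploding one. Everything outside this step is routine definitional unfolding.
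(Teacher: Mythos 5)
Your proof is correct. For the three biconditionals (1)--(3) it is essentially the paper's own argument: the direction from $\sVd$ to $\Vdash$ is Lemma~\ref{srefutes_refutes}, and the converse proceeds exactly as you describe --- pass to a world witnessing a clause of the strong refutation, show that it forces the whole formula, and let the hypothesis collapse it into an exploding world; your facts (a) and (b) are precisely the instantiate-at-the-same-world steps the paper performs inline (e.g.\ ``$w''$ is exploding since we have $w'':A\sVd$ by monotonicity''). The only cosmetic difference there is that where you re-derive ``forcing a component entails forcing the compound'' from (a)/(b), the paper cites the corresponding clauses of Lemma~\ref{forcing_comparison} for items (2) and (3). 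Where you genuinely depart from the paper is in items (4) and (5): you observe that the hypotheses are, verbatim, the defining clauses of $w:A\wedge B\sVd$ and $w:\forall x.A(x)\sVd$ in Definition~\ref{def-composite}, so Lemma~\ref{srefutes_refutes} finishes immediately. The paper instead runs a direct argument: take $w'\ge w$ forcing $A\wedge B$ (resp.\ $\forall x.A(x)$), extract forcing of the relevant component via Lemma~\ref{forcing_comparison}, and explode $w'$ using monotonicity of refutation --- i.e.\ your fact (b) again. Your route for (4)--(5) is shorter and arguably the more transparent one, since it exposes these items as nothing more than Lemma~\ref{srefutes_refutes} specialised to definitional clauses; the paper's route spells out the same content without exploiting that the hypothesis literally \emph{is} a strong refutation. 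Both are sound, and your parenthetical explanation of why (4)--(5) cannot be upgraded to biconditionals matches the paper's own remark that $w:A\wedge B\sVd$, unlike $w:A\wedge B\Vdash$, records \emph{which} conjunct is refuted.
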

\begin{proof}
  \begin{enumerate}
  \item Right-to-left is Lemma \ref{srefutes_refutes}.

    Left-to-right: Suppose $w' \ge w$ and $w' : A \sVd$. In order to
    show that $w'$ is exploding it suffices to show $w' :\Vdash A \to
    B$. For this assume $w'' \ge w'$ and $w'' :A \to B \sVd$, i.e.,
    $w'' :\Vdash A$ and $w'' :B \Vdash$. Then $w''$ is exploding since
    we have $w'' :A\sVd$ by monotonicity. Similarly, we can show
    $w:B\Vdash$.

  \item Right-to-left is Lemma \ref{srefutes_refutes}.

    Left-to-right: Suppose $w' \ge w$ and $w' : \Vdash A$. Then by
    Lemma \ref{forcing_comparison}, $w' :\Vdash A \lor B$ holds. So
    $w'$ is exploding. That is $w : A \Vdash$. Similarly, $w :B
    \Vdash$ holds.

  \item Right-to-left is Lemma \ref{srefutes_refutes}.

    Left-to-right: Suppose $w'' \ge w'\ge w$, $d\in D(w')$ and $w''
    :\Vdash A(d)$. Then by Lemma \ref{forcing_comparison}, $w''
    :\Vdash \exists x. A(x)$. So $w''$ is exploding since we have $w''
    : \exists x. A(x) \Vdash$ by monotonicity.

  \item Suppose w.l.o.g. $w : A \Vdash$, $w' \ge w$ and $w' :\Vdash A
    \land B$. Then by Lemma \ref{forcing_comparison}, $w' :\Vdash
    A$. So $w'$ is exploding because we have $w' : A \Vdash$ by
    monotonicity.

  \item Suppose $w' \ge w$ and $w' :\Vdash \forall x.A(x)$. Then by
    Lemma \ref{forcing_comparison}, $w' :\Vdash A(d)$. So $w'$ is
    exploding because we have $w' : A(d) \Vdash$ by monotonicity.
  \end{enumerate}
\end{proof}


We can also say that forcing of $\bot$ and $\top$ behaves like
expected with respect to exploding nodes \cite{Veldman76,Krivine96}:

\begin{lemma}\label{top-bottom}
  \begin{enumerate}
  \item $w :\Vdash \top$ and $w : \bot \Vdash$.

  \item $w$ is exploding iff $w :\Vdash \bot$.

  \item $w$ is exploding iff $w : \top \Vdash$.
  \end{enumerate}
\end{lemma}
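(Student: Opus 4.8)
The plan is to discharge all three items by pure definition-chasing, relying on just three facts: reflexivity of $\le$ (so that $w\ge w$ is always available), monotonicity of the exploding predicate, and the two base clauses stating that $\bot$ is always strongly refuted while $\top$ is never strongly refuted. None of the inductive clauses for composite formulae are needed, since every formula involved is atomic.

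For item~1, to prove $w:\Vdash\top$ I would unfold forcing: I must show that every $w'\ge w$ with $w':\top\sVd$ is exploding. Since $\top$ is never strongly refuted, the hypothesis $w':\top\sVd$ is uninhabited, so there is nothing left to establish. To prove $w:\bot\Vdash$ I would unfold refutation: given $w'\ge w$ with $w':\Vdash\bot$, I must show $w'$ exploding; instantiating the forcing of $\bot$ at $w'$ itself (via reflexivity, with witness $w''=w'$) together with the clause that $\bot$ is always strongly refuted yields $w':\bot\sVd$, hence $w'\explodes$.

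For item~2, the direction $w:\Vdash\bot\Rightarrow w\explodes$ follows by instantiating the forcing definition at $w$ itself and using that $\bot$ is always strongly refuted to supply $w:\bot\sVd$; the converse $w\explodes\Rightarrow w:\Vdash\bot$ is immediate from monotonicity of the exploding predicate, since any $w'\ge w$ inherits explosion from $w$. Item~3 is dual: $w:\top\Vdash\Rightarrow w\explodes$ follows by instantiating the refutation definition at $w$ and invoking $w:\Vdash\top$ from item~1, while $w\explodes\Rightarrow w:\top\Vdash$ is again just monotonicity of explosion.

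The only genuinely delicate point is the clause $w:\Vdash\top$: here the conclusion is reached because the antecedent $w':\top\sVd$ can never hold, and I would stress that this vacuity is legitimate constructively---it is a case analysis on a strong-refutation relation that simply has no clause for $\top$, and so is of a different character from the \emph{ex falso} on $\explodes$ that the paper is careful never to invoke. Everything else is routine; the recurring move throughout is to take $w'=w$ by reflexivity and then read off explosion from the relevant base clause.
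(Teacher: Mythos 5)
Your proof is correct and takes essentially the same route as the paper's: the paper's own proof just unfolds the definitions (its item~2 is exactly your chain of equivalences, using reflexivity of $\le$ in one direction and monotonicity of $\Vdash_\bot$ in the other, with items~1 and~3 dismissed as ``obvious'' and ``similar''). Your spelled-out details, including the vacuous-hypothesis treatment of $w:\Vdash\top$ and the remark that this vacuity is a constructively harmless use of an empty hypothesis rather than the meta-level \emph{ex falso} on exploding nodes that the paper avoids, are accurate elaborations of the same argument.
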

\begin{proof}
  \begin{enumerate}
  \item Obvious.

  \item Let $w$ be an arbitrary world.
    \begin{eqnarray*}
      w :\Vdash \bot & \Longleftrightarrow & \forall (w' \ge w)\, (w' : \bot \sVd\,\, \Rightarrow\,\, w' :\Vdash_\bot) \\
      & \Longleftrightarrow & \forall (w' \ge w) \, (w' :\Vdash_\bot) \,\,\iff\,\,  w :\Vdash_\bot
    \end{eqnarray*}

  \item Similar.
  \end{enumerate}
\end{proof}

We can use the previous lemmas to show that the forcing relation for
classical logic behaves ``classically'' indeed:

\begin{lemma}\label{lemma-9}
  The following hold in the classical Kripke semantics.
  \begin{enumerate}
\item $w:\Vdash A \iff w:\neg A\sVd$. \label{9-1}
\item $w : A \Vdash \iff w :\Vdash \neg A$. \label{9-2}
\item $w : \neg A \Vdash \iff w :\Vdash A$. \label{9-2'}
\item $w : \neg A \Vdash \iff w :\neg A\sVd$. \label{9-2''}
\item $w : \Vdash A \iff w :\Vdash \neg\neg A$. \label{9-3}
\item $w : A \Vdash \iff w : \neg\neg A \Vdash$. \label{9-4}
\item $w : \neg A \sVd \iff w : \Vdash \neg \neg A \Vdash \iff w : \Vdash A$.\label{9-x}
\end{enumerate}
\end{lemma}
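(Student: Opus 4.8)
The plan is to reduce every item to part~(1) and then to manipulate, purely formally, the universal quantifier ``$\forall w'\ge w$'' hidden inside the definitions of forcing and refutation. First I would prove~(1). Since $\neg A$ abbreviates $A\to\bot$, the strong-refutation clause for implication says that $w:\neg A\sVd$ holds iff $w:\Vdash A$ and $w:\bot\Vdash$; but $w:\bot\Vdash$ holds always by Lemma~\ref{top-bottom}(1), so $w:\neg A\sVd\iff w:\Vdash A$. This is the only place where the definition of the connective is actually touched; from here on $\neg$ is treated as a black box satisfying~(1).

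Next I would record one auxiliary observation that does most of the work: if both $w:\Vdash A$ and $w:A\Vdash$ hold, then $w\explodes$. This is immediate from the definition of refutation, by instantiating its quantifier at $w'=w$ (using reflexivity of $\le$) and feeding it $w:\Vdash A$. With~(1) in hand, part~(2) is then obtained by substituting the equivalence of~(1) under the quantifier defining forcing: $w:\Vdash\neg A$ means that every $w'\ge w$ with $w':\neg A\sVd$ explodes, which by~(1) is verbatim the refutation condition $w:A\Vdash$.

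Part~(3) is the heart of the argument. Unfolding $w:\neg A\Vdash$ and applying~(2) pointwise rewrites it as ``every $w'\ge w$ with $w':A\Vdash$ explodes'', which I would then compare with $w:\Vdash A$, i.e.\ ``every $w'\ge w$ with $w':A\sVd$ explodes''. The forward direction follows because strong refutation implies refutation (Lemma~\ref{srefutes_refutes}); the backward direction is where the exploding-node mechanism is genuinely needed: given $w:\Vdash A$ and some $w'\ge w$ with $w':A\Vdash$, monotonicity (Lemma~\ref{monotone}) yields $w':\Vdash A$, and the auxiliary observation applied at $w'$ gives $w'\explodes$.

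Everything else is chaining of~(1)--(3). Part~(4) is the composite $w:\neg A\Vdash\iff w:\Vdash A\iff w:\neg A\sVd$ by~(3) then~(1) (it is also the instance $B=\bot$ of Lemma~\ref{cbn-to-cbv}(1)). Part~(5) follows from $w:\Vdash\neg\neg A\iff w:\neg A\Vdash\iff w:\Vdash A$, using~(2) on $\neg A$ and then~(3); part~(6) from $w:\neg\neg A\Vdash\iff w:\Vdash\neg A\iff w:A\Vdash$, using~(3) on $\neg A$ and then~(2). Finally part~(7) merely collects~(1) and~(5), since both $w:\neg A\sVd$ and $w:\Vdash\neg\neg A$ are equivalent to $w:\Vdash A$. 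I expect the backward direction of~(3) to be the only real obstacle, as it is the single point at which refutation and forcing of the same formula must be forced to collide into an exploding node; all the rest is bookkeeping that only has to be performed at the right world and in the right order.
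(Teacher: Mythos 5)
Your proof is correct and takes essentially the same route as the paper's: item (1) from the definitional clause for implication together with $w:\bot\Vdash$, then (2) and (3), then (4)--(7) by chaining. The only difference is one of packaging: where the paper discharges (2) and (3) by citing its earlier general lemmas --- (2) is Lemma~\ref{forcing_comparison}(\ref{imply}) at $B=\bot$ combined with Lemma~\ref{top-bottom}, and (3) is Lemma~\ref{cbn-to-cbv}(\ref{cbn-to}) at $B=\bot$ composed with (1) --- you inline direct proofs of exactly those special cases, your backward direction of (3) (monotonicity plus the collision of forcing and refutation of $A$ at the same world into an exploding node) being precisely the paper's proof of Lemma~\ref{cbn-to-cbv}(\ref{cbn-to}) specialized to $B=\bot$.
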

\begin{proof}
  \begin{enumerate}
  \item Obvious by definition because $w:\bot \Vdash$.

  \item It follows from Lemma \ref{forcing_comparison}.

  \item Obvious by Lemma \ref{cbn-to-cbv} and the previous claims.

  \item[\ref{9-2''}.] $\sim$ \ref{9-x}. Obvious from the previous
    claims.

\end{enumerate}
\end{proof}

\begin{corollary}\label{double-neg-refutation}
  In any classical Kripke model, the following holds.
  \begin{equation*}
    w : \neg A \sVd \iff w : \Vdash \neg \neg A \iff w : \Vdash A
  \end{equation*}
\end{corollary}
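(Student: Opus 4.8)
The plan is to read this corollary off directly from Lemma~\ref{lemma-9}, since the statement merely repackages equivalences already isolated there. First I would observe that the equivalence between the two outer expressions, $w:\neg A\sVd \iff w:\Vdash A$, is precisely claim~(\ref{9-1}) of Lemma~\ref{lemma-9} (written in the reverse order). Second, the equivalence between the middle and the last expression, $w:\Vdash\neg\neg A \iff w:\Vdash A$, is precisely claim~(\ref{9-3}). Chaining these two biconditionals by transitivity of $\iff$ yields the full chain $w:\neg A\sVd \iff w:\Vdash\neg\neg A \iff w:\Vdash A$, which is the assertion of the corollary.

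If a more self-contained derivation is wanted, the outer equivalence can instead be unfolded from the definitions: $w:\neg A\sVd$ means $w:A\to\bot\sVd$, which by Definition~\ref{def-composite} holds iff $w:\Vdash A$ and $w:\bot\Vdash$; since $w:\bot\Vdash$ always holds by Lemma~\ref{top-bottom}(1), this collapses to $w:\Vdash A$. The middle equivalence is then exactly the double-negation stability of forcing recorded in claim~(\ref{9-3}), whose justification in turn rests on the interplay between forcing and refutation set up in Lemmas~\ref{forcing_comparison} and~\ref{cbn-to-cbv}. Either route reaches the same chain.

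I do not expect a genuine obstacle here: all the real content has already been extracted in Lemma~\ref{lemma-9}, and this corollary is essentially its clean restatement (in particular, it tidies the slightly garbled middle term of claim~(\ref{9-x})). The only point requiring attention is to keep the reasoning constructive, so that no meta-level appeal to \emph{ex falso} creeps in while manipulating the exploding-node clauses; but since both invoked claims are themselves proved constructively, transitivity of the biconditional preserves constructivity and the argument goes through unchanged.
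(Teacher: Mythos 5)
Your proposal is correct and matches the paper's intent exactly: the corollary is stated without a separate proof precisely because it is the tidied restatement of claim~(\ref{9-x}) of Lemma~\ref{lemma-9}, obtained by chaining claims~(\ref{9-1}) and~(\ref{9-3}), which is what you do. Your optional self-contained unfolding of $w:\neg A\sVd$ via Definition~\ref{def-composite} and Lemma~\ref{top-bottom} is also sound and simply reproduces the paper's own one-line justification of claim~(\ref{9-1}).
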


We now consider the following double-negation translation $(\cdot)^*$,
which is the one of G\"odel-Gentzen\cite{goedel33,gentzen36}, except
that atomic formulae, $\bot$ and $\top$ are not doubly negated:
\begin{eqnarray*}
X^* & := & X \quad \text{($X$ is atomic, $\bot$ or $\top$)}\\
(A \land B)^* & := & A^* \land B^*\\
(A \to B)^* & := & A^* \to B^*\\
(\forall x. A)^* & := & \forall x. A^*\\
(A \lor B)^* & := & \neg (\neg A^* \land \neg B^*)\\
(\exists x.A)^* & := & \neg \forall x.\neg A^*
\end{eqnarray*}

\begin{proposition} \label{DN-proposition}
  Every classical Kripke model $\mathcal{C} =
  (K,\le,D,\sVd,\Vdash_\bot)$ gives rise to an intuitionistic Kripke
  model with exploding worlds $\mathcal{I} =
  (K,\le,D,\Vdash_i,\Vdash_\bot)$, which inherits all components of
  $\mathcal{C}$, except for $\Vdash_i$, which is defined for atomic
  formulae by non-strong forcing, i.e.
  \[
  w\Vdash_i X \text{ iff } w:\Vdash X
  \]
  The translation $(\cdot)^*$ relates $\mathcal{C}$ and $\mathcal{I}$,
  that is, for any world $w$ and any formula $A$ we have
  \[
  w\Vdash_i A^* \text{ iff } w:\Vdash A.
  \]
\end{proposition}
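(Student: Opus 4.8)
The plan is to prove the stated equivalence $w\Vdash_i A^* \iff w:\Vdash A$ by induction on the structure of $A$, where $\Vdash_i$ is extended from atoms to composite formulae by the usual intuitionistic Kripke clauses for exploding-node models, with $w\Vdash_i\bot$ holding exactly when $w$ is exploding and $w\Vdash_i\neg B$ meaning that every $w'\ge w$ with $w'\Vdash_i B$ is exploding. First I would verify that $\mathcal{I}$ really is a legitimate intuitionistic Kripke model with exploding worlds: everything is inherited from $\mathcal{C}$ except atomic forcing $w\Vdash_i X := (w:\Vdash X)$, and its monotonicity is immediate from Lemma~\ref{monotone}.

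Before the induction I would isolate a sub-lemma that carries most of the weight: for every formula $A$, one has $w\Vdash_i\neg A^* \iff w:A\Vdash$. Since $\neg A^* = A^*\to\bot$ and $w'\Vdash_i\bot$ means $w'$ is exploding, $w\Vdash_i\neg A^*$ says that every $w'\ge w$ with $w'\Vdash_i A^*$ is exploding; applying the induction hypothesis for $A$ rewrites this as ``every $w'\ge w$ with $w':\Vdash A$ is exploding'', which is exactly the definition of refutation $w:A\Vdash$. Thus the sub-lemma is a direct consequence of the main induction hypothesis together with the definitions of intuitionistic negation and of classical refutation.

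The base and ``direct'' cases are then routine. For $A$ atomic the equivalence is the definition of $\Vdash_i$; for $\bot$ and $\top$ it follows from Lemma~\ref{top-bottom}. For $A=B\wedge C$, $A=B\to C$, and $A=\forall x.B(x)$ the translation leaves the outermost connective unchanged, so the intuitionistic clause matches the corresponding equivalence of Lemma~\ref{forcing_comparison} (namely \eqref{and}, \eqref{imply}, \eqref{forall}) once the induction hypothesis is applied to the immediate subformulae.

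The interesting cases, which I expect to be the main obstacle, are disjunction and existential quantification, where $(\cdot)^*$ inserts negations and Lemma~\ref{forcing_comparison} supplies only one direction. For $A=B\vee C$ we have $(B\vee C)^* = \neg(\neg B^*\wedge\neg C^*)$; unfolding the intuitionistic conjunction clause and applying the sub-lemma twice shows that $w'\Vdash_i \neg B^*\wedge\neg C^*$ is equivalent to ``$w':B\Vdash$ and $w':C\Vdash$'', which by Definition~\ref{def-composite} is precisely $w':B\vee C\sVd$. Hence $w\Vdash_i(B\vee C)^*$ says that every $w'\ge w$ strongly refuting $B\vee C$ is exploding, i.e.\ $w:\Vdash B\vee C$ by the definition of forcing. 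The existential case is analogous: from $(\exists x.B)^* = \neg\forall x.\neg B^*$, the $\forall$-clause together with the sub-lemma turns $w'\Vdash_i\forall x.\neg B^*$ into ``for all $w''\ge w'$ and $d\in D(w'')$, $w'':B(d)\Vdash$'', which is exactly $w':\exists x.B(x)\sVd$, so forcing of the double negation matches $w:\Vdash\exists x.B(x)$. The crux throughout is that the classical clauses for strong refutation of $\vee$ and $\exists$ were themselves defined in terms of refutation, so they line up precisely with the negations introduced by the translation, once the sub-lemma bridges intuitionistic negation and classical refutation.
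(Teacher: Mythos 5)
Your proposal is correct and takes essentially the same route as the paper: induction on the complexity of $A$, with Lemma~\ref{forcing_comparison}~(1)--(3) and Lemma~\ref{top-bottom} disposing of the cases $\{\to,\wedge,\forall,\top,\bot\}$ and atoms, and an unfolding of the intuitionistic clauses against Definition~\ref{def-composite} handling $\vee$ and $\exists$. Your sub-lemma $w\Vdash_i \neg A^* \iff w:A\Vdash$ is exactly the rewriting step the paper performs in-line (twice) in the middle of its chain of equivalences for the $\vee$ case, so it is a clean packaging of the same argument rather than a different one.
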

\begin{proof} By induction on the complexity of $A$ and by using
  (1)-(3) from Lemma \ref{forcing_comparison} and (2) from Lemma
  \ref{top-bottom}. We detail only the induction case for $\vee$,
  which is the most involved one:
  \begin{eqnarray*}
    & w\Vdash_i (A\vee B)^* & \iff \\
    & w\Vdash_i \neg(\neg A^* \wedge \neg B^*) & \iff \\
    (\forall w'\ge w) & 
    [ w'\Vdash_i\neg A^*, w'\Vdash_i \neg B^* \To w'\Vdash_i\bot] & \iff \\
    (\forall w'\ge w)[ & 
    (\forall w''\ge w')[w''\Vdash_i A^* \To w''\Vdash_i\bot], & \\
    &
    (\forall w''\ge w')[w''\Vdash_i B^* \To w''\Vdash_i\bot] & \\
    & \To w'\Vdash_i\bot] & \iff \\
    (\forall w'\ge w)[ & 
    (\forall w''\ge w')[w'':\Vdash A \To w''\Vdash_\bot], & \\
    &
    (\forall w''\ge w')[w'':\Vdash B \To w''\Vdash_\bot] & \\
    & \To w'\Vdash_\bot] & \iff \\
    (\forall w'\ge w) & 
    [ w':A\Vdash, w':B\Vdash \To w'\Vdash_\bot] & \iff \\
    (\forall w'\ge w) & 
    [ w':A\vee B\sVd \To w'\Vdash_\bot] & \iff \\
    & w:\Vdash A\vee B & 
  \end{eqnarray*}
\end{proof}

\section{{\LKMMT} and Soundness}\label{soundness}

\begin{table}[t]
  \begin{tabular}{ | c | c |}
    \hline
    ~ & ~ \\
    \AxiomC{}
    \RightLabel{\AXL}
    \UnaryInfC{$\Gamma|A\vdash A,\Delta$}
    \DisplayProof
    &
    \AxiomC{}
    \RightLabel{\AXR}
    \UnaryInfC{$A,\Gamma\vdash A|\Delta$}
    \DisplayProof
    \\
    ~ & ~ \\
    \AxiomC{$\Gamma,A\vdash\Delta$}
    \RightLabel{$(\tilde\mu)$}
    \UnaryInfC{$\Gamma|A\vdash\Delta$}
    \DisplayProof
    &
    \AxiomC{$\Gamma\vdash A,\Delta$}
    \RightLabel{$(\mu)$}
    \UnaryInfC{$\Gamma\vdash A|\Delta$}
    \DisplayProof
    \\
    ~ & ~ \\
    \AxiomC{$\Gamma\vdash A|\Delta$} \AxiomC{$\Gamma|B\vdash \Delta$}
    \RightLabel{$(\to_L)$}
    \BinaryInfC{$\Gamma|A\to B\vdash \Delta$}
    \DisplayProof
    &
    \AxiomC{$\Gamma,A\vdash B|\Delta$}
    \RightLabel{$(\to_R)$}
    \UnaryInfC{$\Gamma\vdash A\to B|\Delta$}
    \DisplayProof
    \\
    ~ & ~ \\
    \AxiomC{$\Gamma|A\vdash\Delta$}\AxiomC{$\Gamma|B\vdash\Delta$}
    \RightLabel{$(\vee_L)$}
    \BinaryInfC{$\Gamma|A\vee B\vdash\Delta$}
    \DisplayProof
    &
    \AxiomC{$\Gamma\vdash A|\Delta$}
    \RightLabel{$(\vee^1_R)$}
    \UnaryInfC{$\Gamma\vdash A\vee B|\Delta$}
    \DisplayProof
    \quad
    \AxiomC{$\Gamma\vdash B|\Delta$}
    \RightLabel{$(\vee^2_R)$}
    \UnaryInfC{$\Gamma\vdash A\vee B|\Delta$}
    \DisplayProof
    \\
    ~ & ~ \\
    \AxiomC{$\Gamma|A\vdash \Delta$}
    \RightLabel{$(\wedge^1_L)$}
    \UnaryInfC{$\Gamma|A\wedge B\vdash \Delta$}
    \DisplayProof
    \quad
    \AxiomC{$\Gamma|B\vdash \Delta$}
    \RightLabel{$(\wedge^2_L)$}
    \UnaryInfC{$\Gamma|A\wedge B\vdash \Delta$}
    \DisplayProof
    &
    \AxiomC{$\Gamma\vdash A|\Delta$}\AxiomC{$\Gamma\vdash B|\Delta$}
    \RightLabel{$(\wedge_R)$}
    \BinaryInfC{$\Gamma\vdash A\wedge B|\Delta$}
    \DisplayProof
    \\
    ~ & ~ \\
    \AxiomC{$\Gamma|A(x)\vdash\Delta$}\AxiomC{$x$ fresh}
    \RightLabel{$(\exists_L)$}
    \BinaryInfC{$\Gamma|\exists x A(x)\vdash\Delta$}
    \DisplayProof 
    & 
    \AxiomC{$\Gamma\vdash A(t)|\Delta$}
    \RightLabel{$(\exists_R)$}
    \UnaryInfC{$\Gamma\vdash \exists x. A(x)|\Delta$}
    \DisplayProof
    \\
    ~ & ~ \\
    \AxiomC{$\Gamma|A(t)\vdash \Delta$}
    \RightLabel{$(\forall_L)$}
    \UnaryInfC{$\Gamma|\forall x. A(x)\vdash \Delta$}
    \DisplayProof
    & 
    \AxiomC{$\Gamma\vdash A(x)|\Delta$}\AxiomC{$x$ fresh}
    \RightLabel{$(\forall_R)$}
    \BinaryInfC{$\Gamma\vdash \forall x A(x)|\Delta$}
    \DisplayProof 
    \\
    ~ & ~ \\
    \AxiomC{}
    \RightLabel{$(\bot_L)$}
    \UnaryInfC{$\Gamma|\bot\vdash\Delta$}
    \DisplayProof
    & ~
    \\
    ~ & ~ \\
    ~
    &
    \AxiomC{}
    \RightLabel{$(\top_R)$}
    \UnaryInfC{$\Gamma\vdash\top|\Delta$}
    \DisplayProof
    \\
    ~ & ~ \\
    \hline
    \multicolumn{2}{ |c|}{~}\\
    \multicolumn{2}{ |c|}{
      \AxiomC{$\Gamma\vdash A|\Delta$}\AxiomC{$\Gamma|A\vdash\Delta$}
      \RightLabel{\CUT}
      \BinaryInfC{$\Gamma\vdash\Delta$}
      \DisplayProof 
    }\\
    \multicolumn{2}{ |c|}{~}\\
    \hline
  \end{tabular}
  \caption{The sequent calculus \LKMMT}
  \label{LKmmt}
\end{table}

To emphasise the symmetries of classical logic, we use a sequent
calculus in the style of Gentzen's LK as proof system. We could have
directly used LK or one of its variants with implicit structural
rules, {\em \`a la} Kleene-Kanger. In practise, even though the
current paper does not go into the details of the computational
content of proofs, we rely here on {\LKMMT} which has a simple symmetrical
variant of $\lambda$-calculus as underlying language of
proofs~\cite{CurienH00,HerbelinHabilitation}\footnote{Note that even
  if not based on $\lambda$-calculus, there are calculi of proof-terms
  for LK too, see
  e.g.~\cite{UrbanPhD,DBLP:journals/entcs/Lengrand03,DBLP:conf/ictcs/BakelLL05}.}.

{\LKMMT} is presented on Table~\ref{LKmmt}. It differs from LK in the
following points:

\begin{itemize}

\item Sequents come with an explicitly distinguished formula on the
  right or on the left, or no distinguished formula at all, resulting
  in three kinds of sequents: ``$\Gamma \vdash \Delta$'', ``$\Gamma |
  A \vdash \Delta$'' and ''$\Gamma \vdash A | \Delta$''. Especially,
  the distinguished formula plays an ``active'' r\^{o}le in the
  rules.

\item Accordingly, the axiom rule splits into two variants {\AXL} and
  {\AXR} depending on whether the left active formula or the right
  active formula is distinguished. There are also two new rules,
  $(\mu)$ and $(\tilde\mu)$, for making a formula active\footnote{Note
    that we have to define the contexts of formulae $\Gamma$ and
    $\Delta$ as ordered sequences to get a non ambiguous
    interpretation of {\LKMMT} as a typed $\lambda$-calculus.}.

\item There are no explicit contraction rules: contractions are
  derivable from a cut against an axiom as follows:

  \begin{itemize}
  \item Left contraction:
    \begin{equation*}\tag{$Contr_L$}
      \begin{tabular}{cc}
        \axc{}
        \uic{\Gamma,A\vdash A\mid \Delta}{\AXR}
        \axc{\Gamma,A\mid A\vdash\Delta}
        \bic{\Gamma,A\vdash\Delta}{\CUT}
        \DisplayProof
      \end{tabular}
    \end{equation*}

\item Right contraction:
  \begin{equation*}\tag{$Contr_R$}
    \begin{tabular}{cc}
      \axc{\Gamma\vdash A\mid A,\Delta}
      \axc{}
      \uic{\Gamma\mid A\vdash A,\Delta}{\AXL}
      \bic{\Gamma\vdash A,\Delta}{\CUT}
      \DisplayProof
    \end{tabular}
  \end{equation*}
\end{itemize}

\item Consequently, the notion of normal proof, or cut-freeness, is
  slightly different from the notion of cut-freeness in LK: a {\it
    normal proof} is a proof whose only cuts are of the form of a cut
  between an axiom and an introduction rule\footnote{The rules \MU\
    and \MUT\ are not introduction rules, because they do not
    introduce a formula constructor.}. This is the notion that we
  refer to when below, very often, we say ``cut-free'' or ``provable
  without a cut''.

\end{itemize}

The correspondence between normal proofs of LK and normal proofs of
{\LKMMT} is direct. If we present LK with weakening rules attached to
the axiom rules \textit{\`a la} Kleene's $G_4$ or Kanger's LC, 
we obtain an LK proof
from an {\LKMMT} proof by erasing the bars serving to distinguish active
formulae, and by removing the trivial inferences coming from the rules
$(\mu)$ and $(\tilde\mu)$. In the other way round, every introduction
rule of LK can be derived in {\LKMMT} by applying the rules $(\mu)$ and
$(\tilde\mu)$ on the premises and a (possibly dummy) contraction
(i.e. a cut against an axiom) on the conclusion of the rule. Similarly
for the axiom rule (for which there are two possible derivations) and
the cut rule. For more details we refer the reader to
\cite{CurienH00}.



For a constant $c$, let $\Gamma_c(t), \Delta_c(t), A_c(t)$ be obtained
from $\Gamma, \Delta, A$ by replacing each constant $c$ with a term
$t$.

\begin{lemma}[Weakening]\label{weakening}
  Suppose $\Gamma \subseteq \Gamma'$ and $\Delta \subseteq\Delta'$.
  \begin{itemize}
  \item $\Gamma\vdash\Delta$ implies $\Gamma'\vdash\Delta'$.
    
  \item $\Gamma\vdash A\mid \Delta$ implies $\Gamma'\vdash A\mid \Delta'$.

  \item $\Gamma\mid A\vdash\Delta$ implies $\Gamma'\mid A\vdash\Delta'$.
  \end{itemize}
  Moreover, no further cuts in the derivations on the right-hand side
  are necessary.
\end{lemma}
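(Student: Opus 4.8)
The plan is to prove all three implications \emph{simultaneously} by induction on the derivation of the hypothesis, with the enlargements $\Gamma\subseteq\Gamma'$ and $\Delta\subseteq\Delta'$ fixed. Since the rules \MU\ and \MUT\ move between the unmarked sequents $\Gamma\vdash\Delta$ and the marked sequents $\Gamma\vdash A\mid\Delta$, $\Gamma\mid A\vdash\Delta$, the three statements must be established by one common induction rather than three separate ones. The two base cases are the axioms: an instance of \AXL\ has the shape $\Gamma\mid A\vdash A,\Delta$, and its weakening $\Gamma'\mid A\vdash A,\Delta'$ is again an instance of \AXL\ — only the side contexts grow, while the distinguished $A$ and its right-hand copy are untouched; symmetrically for \AXR. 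Hence the base cases spawn no premises and no cuts.

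For the remaining rules the step is uniform. Each propositional rule ($(\to_L)$, \IMPR, \DISJL, $(\vee^1_R)$, $(\vee^2_R)$, $(\wedge^1_L)$, $(\wedge^2_L)$, \CONJR, $(\bot_L)$, $(\top_R)$), the activation rules \MU\ and \MUT, the cut rule \CUT, and the two quantifier rules carrying no freshness side condition ($(\forall_L)$, $(\exists_R)$) act only on the distinguished formula while passing $\Gamma$ and $\Delta$ essentially unchanged to their premises. I therefore apply the induction hypothesis to each premise with the same enlargements $\Gamma\subseteq\Gamma'$, $\Delta\subseteq\Delta'$ — for a rule such as \IMPR, whose premise is $\Gamma,A\vdash B\mid\Delta$, one uses $\Gamma,A\subseteq\Gamma',A$ — and then reapply the very same rule to the weakened premises. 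This reconstructs a derivation of the weakened conclusion without introducing any new rule instance, in particular without any cut.

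The only genuine difficulty is the eigenvariable condition of $(\forall_R)$ and $(\exists_L)$. There the premise derivation is built with an eigenconstant $c$ chosen fresh for the original endsequent, but $c$ may well occur in the newly added formulae of $\Gamma'$ or $\Delta'$, so the freshness side condition would fail after weakening. I resolve this before invoking the induction hypothesis: pick a constant $c'$ occurring nowhere in the premise derivation nor in $\Gamma',\Delta'$, and replace $c$ by $c'$ throughout that subderivation, passing to $(\cdot)_{c}(c')$. A routine structural induction shows that this constant-for-constant substitution sends a derivation to a derivation of the correspondingly substituted sequent and creates no new cuts; since $c$ does not occur in the conclusion, the endsequent of the renamed subderivation is unchanged except that its eigenconstant is now $c'$, which is fresh for $\Gamma',\Delta'$. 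After this renaming the induction hypothesis applies and the rule can be reapplied with its side condition satisfied. Finally, the closing remark that ``no further cuts are necessary'' is immediate from the construction: every case either re-uses an existing rule instance on a larger context or performs the cut-free renaming substitution, so the resulting derivation contains exactly the cuts of the original and no others; in particular, since weakening an axiom yields an axiom and weakening an introduction rule yields an introduction rule, a normal (cut-free) derivation is weakened to a normal derivation.
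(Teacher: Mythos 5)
Your proof is correct: the paper states this weakening lemma without any proof, treating it as routine, and your simultaneous induction on derivations --- reapplying each rule to the weakened premises, renaming the eigenvariable in the $(\forall_R)$ and $(\exists_L)$ cases, and observing that axioms go to axioms and introduction rules to introduction rules so that the paper's notion of normal proof (only axiom-versus-introduction cuts) is preserved --- is exactly the standard argument the paper presupposes, including the crucial ``no new cuts'' clause. Two cosmetic points only: the paper's quantifier rules use a fresh \emph{variable} rather than a constant as eigenvariable, so the renaming should be variable-for-variable (cf.\ the paper's Lemma~\ref{fresh-general}), and since the renamed subderivation is no longer literally a subderivation of the original, the induction is cleanest when phrased on the height of the derivation (which renaming preserves); neither point affects the substance.
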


\begin{lemma}\label{fresh-general}
  Let $c$ be a constant and $y$ a variable which does not appear in
  $\Gamma, \Delta, A$.
  \begin{itemize}
  \item $\Gamma\vdash \Delta$ implies $\Gamma_c(y)\vdash \Delta_c(y)$.

  \item $\Gamma\vdash A\mid \Delta$ implies $\Gamma_c(y)\vdash
    A_c(y)\mid \Delta_c(y)$.

  \item $\Gamma \mid A\vdash\Delta$ implies $\Gamma_c(y) \mid
    A_c(y)\vdash \Delta_c(y)$.
  \end{itemize}
  Moreover, no further cuts in the derivations on the right-hand side
  are necessary.
\end{lemma}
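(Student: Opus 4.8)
The plan is to proceed by simultaneous induction on the three kinds of derivations, since the three statements refer to the three forms of sequent and the rules of {\LKMMT} freely pass between them. For the fixed constant $c$ and the fixed variable $y$ not occurring in the end sequent, I would show that applying the renaming $(\cdot)_c(y)$ to every sequent in a given derivation again yields a valid derivation of the renamed end sequent, with each inference step instantiated to the same rule.

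First I would treat the leaves and the easy inner steps. The axioms \AXL\ and \AXR\ remain axioms after renaming, since $(\cdot)_c(y)$ acts uniformly on all formulae and $A_c(y)$ appears on both sides exactly as required; likewise $(\bot_L)$ and $(\top_R)$ are unaffected. The structural rules \MU\ and \MUT\ and the propositional rules for $\wedge,\vee,\to$ all commute with the renaming because it is a homomorphism on formula structure, e.g. $(A\to B)_c(y)=A_c(y)\to B_c(y)$, so one simply applies the induction hypothesis to the premises and reapplies the same rule. The quantifier rules carrying an instantiating term, namely $(\forall_L)$ and $(\exists_R)$, also go through directly once one observes that renaming commutes with term-instantiation, that is $\bigl(A(t)\bigr)_c(y)=A_c(y)\bigl(t_c(y)\bigr)$, so the renamed premise still instantiates the renamed quantified formula at the renamed term.

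The one place demanding care --- and the main obstacle --- is the eigenvariable condition of $(\exists_L)$ and $(\forall_R)$. There the premise introduces a fresh variable $x$ that must not occur in the renamed conclusion, and since $y$ is a fixed variable while $x$ is merely chosen fresh, the two could in principle collide as names, so that renaming might spoil the freshness side condition or capture $x$. I would dispose of this by first \emph{alpha-renaming} the eigenvariable so that the premise uses a variable distinct from both $y$ and $c$; this is always possible because eigenvariables may be renamed without altering the conclusion. After this precaution $x$ is untouched by $(\cdot)_c(y)$, and $x$ still fails to occur in $\Gamma_c(y),\Delta_c(y)$ because $x\neq y$ and $c$ was a constant rather than $x$, so the side condition is preserved and the induction hypothesis applies to the premise.

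Finally, since every step of the constructed derivation is an instance of exactly the rule used in the original --- the renaming replaces like by like and never forces the insertion of a \CUT\ --- the ``no further cuts'' clause holds automatically. Because the induction is carried out simultaneously over the three sequent forms, the same argument establishes all three items at once.
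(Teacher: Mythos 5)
Your proposal is correct. Note that the paper itself gives \emph{no} proof of Lemma~\ref{fresh-general}: it is stated as a routine syntactic fact, and only Lemma~\ref{fresh} receives a one-line proof by appeal to it. So there is no ``paper approach'' to match; you have supplied the standard structural induction the authors leave implicit, and you correctly identify the two points that need any care at all: that replacing $c$ by $y$ is a homomorphism commuting with term instantiation, and that eigenvariables of $(\forall_R)$/$(\exists_L)$ must first be $\alpha$-renamed away from $y$. One sharpening you should make explicit: your inductive claim is applied to subderivations whose own end sequents may contain $y$ --- this happens not only through eigenvariables but also when $y$ occurs in the witnessing term $t$ of a $(\forall_L)$ or $(\exists_R)$ inference, since $t$ is discarded in the conclusion. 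Hence the invariant you actually induct on should be ``$y$ is bound in no formula of the derivation and is used as no eigenvariable''; the first half follows from $y$ not occurring in the end sequent (binders in a derivation all descend from the end sequent), and the second is secured by your $\alpha$-renaming precaution. With that invariant the $(\forall_L)$/$(\exists_R)$ case goes through exactly as you say, the renamed inference using the term $t_c(y)$, and since every inference is replaced by an instance of the same rule, normality in the sense of {\LKMMT} is preserved, which is the ``no further cuts'' clause.
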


The following lemma says that a fresh constant is as good as a fresh
variable and will play an important role in the proof of cut-free
completeness below.
\begin{lemma}[Fresh constants]\label{fresh}
  Let $c$ be a constant and $y$ a variable which does not appear in
  $\Gamma, \Delta, A$. Assume furthermore that $c$ does not appear in $\Gamma, \Delta$.
  \begin{itemize}
  \item $\Gamma\vdash A(c)\mid \Delta$ implies $\Gamma\vdash A(y)\mid \Delta$.

  \item $\Gamma \mid A(c)\vdash\Delta$ implies $\Gamma \mid A(y)\vdash \Delta$.
  \end{itemize}
  Moreover, no further cuts in the derivations on the right-hand side
  are necessary.
\end{lemma}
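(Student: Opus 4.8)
The plan is to obtain Lemma~\ref{fresh} as a direct consequence of Lemma~\ref{fresh-general}, the extra ingredient being the hypothesis that the constant $c$ occurs neither in $\Gamma$ nor in $\Delta$. I treat the first item; the second is entirely symmetric.

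First I would apply Lemma~\ref{fresh-general} to the given derivation of $\Gamma\vdash A(c)\mid\Delta$, substituting the fresh variable $y$ for the constant $c$. This is legitimate: $y$ appears in none of $\Gamma,\Delta,A$ and $y\neq c$, so $y$ does not occur in the distinguished formula $A(c)$ either. The lemma then yields a derivation of $\Gamma_c(y)\vdash (A(c))_c(y)\mid\Delta_c(y)$, introducing no new cuts. The second step is to simplify this conclusion using the hypotheses: since $c$ occurs in neither $\Gamma$ nor $\Delta$, the substitution leaves them fixed, so $\Gamma_c(y)=\Gamma$ and $\Delta_c(y)=\Delta$; and reading $A(\cdot)$ as a context in which $c$ sits only in the argument positions, replacing every $c$ by $y$ returns exactly $A(y)$, i.e. $(A(c))_c(y)=A(y)$. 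Hence the derivation is one of $\Gamma\vdash A(y)\mid\Delta$, cut-free whenever the original is.

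The only delicate point — which is in fact already discharged within Lemma~\ref{fresh-general} — is the interaction of the replacement $c\mapsto y$ with the eigenvariable side-conditions of $(\forall_R)$ and $(\exists_L)$: were $y$ to coincide with an eigenvariable used higher up in the derivation, the substitution would break freshness. This is handled by first renaming the eigenvariables of the derivation away from $y$, which is possible precisely because $y$ is chosen fresh, and only then carrying out the replacement. Granting this, and granting that $c$ is confined to the argument positions of $A(\cdot)$ so that the equality $(A(c))_c(y)=A(y)$ is genuine rather than the effect of an uncontrolled global substitution, the lemma follows at once.
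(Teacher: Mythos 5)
Your proposal is correct and is exactly the paper's argument: the paper proves Lemma~\ref{fresh} by noting that "it follows directly from the lemma just before," i.e.\ by applying Lemma~\ref{fresh-general} with the substitution $c\mapsto y$ and using the hypothesis that $c$ does not occur in $\Gamma,\Delta$ to conclude $\Gamma_c(y)=\Gamma$, $\Delta_c(y)=\Delta$, and $(A(c))_c(y)=A(y)$. Your additional remarks on eigenvariable renaming and on reading $A(\cdot)$ as a context simply make explicit what the paper leaves implicit.
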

\begin{proof}
  It follows directly from the lemma just before.
\end{proof}

The fact that Lemma \ref{weakening} $\sim$ Lemma \ref{fresh} need not
introduce any new cuts in the derivations on the right-hand side of
the implication will be important for the proof of cut-free
completeness.

\newcommand{\dom}{\mathrm{\textsf{dom}}}

We now show the soundness of {\LKMMT} with respect to the Kripke
semantics. First we need some preparations.

Let $(K, \le , D, \sVd, \Vdash_\bot)$ be a Kripke
model. \emph{Associations} are functions from a finite set of free
variables to $\bigcup_{w\in K} D(w)$. The letters $\rho, \eta, ...$
vary over associations. Given an association $\rho$ and a free
variable $x$, $\rho^{-x}$ denotes the function obtained from $\rho$ by
deleting $x$ from its domain, i.e., $\dom(\rho^{-x}) = \dom(\rho)
\backslash \{x\}$. Let $\rho(x\mapsto d)$ denote the function $\rho'$
such that $\rho'(y) = \rho (y)$ if $y \neq x$ and $d$ otherwise.

Let $c_0$ be a distinguished constant of the language. Given a formula
$A$, let $A[\rho]$ denote the sentence in the extended language with
fresh constants for each element of $D$ obtained from $A$ by replacing
each free variable $x$ with $\rho(x)$ if $x \in \dom(\rho)$ and with
$c_0$ otherwise. $\Gamma[\rho]$ is the context obtained from $\Gamma$
by replacing each $A \in \Gamma$ with $A[\rho]$.

We write $w:\Vdash \Gamma$ when $w$ forces all sentences from $\Gamma$ and
$w:\Delta\Vdash$ when $w$ refutes all sentences from $\Delta$.

The intuitive meaning of the following theorem is that if every
formula in the assumption is forced, then not all formulae in the
conclusion can be refuted.

\begin{theorem}[Soundness] Let A be a formula and $\Gamma, \Delta$
  contexts of formulae. In any classical Kripke model $(K, \le , D,
  \sVd, \Vdash_\bot)$ the following holds: Let $w \in K$ and $\rho$ be
  an associations with the values from $D(w)$.
  \begin{itemize}
  \item If $\Gamma\vdash \Delta$, $w:\Vdash \Gamma [\rho]$ and
    $w:\Delta[\rho]\Vdash$, then $w:\Vdash_\bot$.

  \item If $\Gamma\vdash A | \Delta$, $w:\Vdash \Gamma[\rho]$ and
    $w:\Delta[\rho]\Vdash$, then $w:\Vdash A[\rho]$.

  \item If $\Gamma | A \vdash \Delta$, $w:\Vdash \Gamma[\rho]$ and
    $w:\Delta[\rho]\Vdash$, then $w:A[\rho] \Vdash$.
  \end{itemize}
\end{theorem}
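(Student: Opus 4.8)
The plan is to argue by a single induction on the {\LKMMT} derivation, establishing the three clauses (one for each shape of sequent) simultaneously. For every inference rule I assume the three-part statement for its premises and derive it for its conclusion; throughout, passing from a world $w$ to some $w'\ge w$ is justified by the monotonicity Lemma~\ref{monotone}, and Lemma~\ref{srefutes_refutes} is used to turn strong refutation into refutation wherever needed. The guiding principle is a clean division of labour: right-introduction rules are discharged by the forcing characterisations of Lemma~\ref{forcing_comparison} (with Lemma~\ref{top-bottom}(1) for $\top$), while left-introduction rules are discharged by satisfying the defining strong-refutation clause of Definition~\ref{def-composite} for the principal formula and then invoking Lemma~\ref{srefutes_refutes} (with Lemma~\ref{top-bottom}(1) for $\bot$, or equivalently the items of Lemma~\ref{cbn-to-cbv}).

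The base and structural cases come first. For \AXR the hypothesis $w:\Vdash(A,\Gamma)[\rho]$ already yields $w:\Vdash A[\rho]$, and \AXL is dual, its right-context refutation hypothesis already containing $w:A[\rho]\Vdash$. The activation rules \MU and \MUT are where the bare definitions of forcing and refutation get unfolded. For \MUT I would fix an arbitrary $w'\ge w$ with $w':\Vdash A[\rho]$, transport the context hypotheses up to $w'$ by monotonicity so that $w':\Vdash(\Gamma,A)[\rho]$ and $w':\Delta[\rho]\Vdash$, and then read off $w'\explodes$ from the induction hypothesis for the undistinguished premise; this shows $w:A[\rho]\Vdash$. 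The rule \MU is symmetric, the only extra step being an application of Lemma~\ref{srefutes_refutes} to turn $w':A[\rho]\sVd$ into $w':A[\rho]\Vdash$. Finally \CUT is settled by applying the two relevant induction hypotheses at the \emph{same} world $w$, obtaining both $w:\Vdash A[\rho]$ and $w:A[\rho]\Vdash$, and concluding $w\explodes$ by reflexivity of $\le$ and the definition of refutation.

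The propositional logical rules then follow the dictionary above almost mechanically. For instance, $(\to_R)$ reduces, via the $\Longleftarrow$ direction of \eqref{imply}, to showing that every $w'\ge w$ forcing $A[\rho]$ forces $B[\rho]$, which is exactly the induction hypothesis for the premise after monotonicity; and $(\to_L)$ uses the induction hypotheses to obtain $w:\Vdash A[\rho]$ and $w:B[\rho]\Vdash$, whence $w:(A\to B)[\rho]\sVd$ by the clause of Definition~\ref{def-composite} and $w:(A\to B)[\rho]\Vdash$ by Lemma~\ref{srefutes_refutes}. The rules for $\wedge$ and $\vee$ are entirely analogous, using \eqref{and} and the $\Longleftarrow$ clause for $\vee$ on the right and the strong-refutation clauses on the left, while $(\bot_L)$ and $(\top_R)$ are immediate from Lemma~\ref{top-bottom}(1).

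The quantifier rules are where I expect the genuine work to lie, since they require careful bookkeeping of associations and of which world's domain a term inhabits. For the eigenvariable rules $(\forall_R)$ and $(\exists_L)$ the decisive step is to extend the association to $\rho(x\mapsto d)$ for an arbitrary $d\in D(w')$; because $x$ is fresh we have $\Gamma[\rho(x\mapsto d)]=\Gamma[\rho]$ and $\Delta[\rho(x\mapsto d)]=\Delta[\rho]$, so the transported hypotheses still feed the induction hypothesis, which then delivers precisely $A(d)[\rho]$ --- forced, as required by \eqref{forall}, for $(\forall_R)$, and refuted, feeding the strong-refutation clause for $\exists$ and then Lemma~\ref{cbn-to-cbv}(\ref{9-6'}), for $(\exists_L)$. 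For the witness rules $(\exists_R)$ and $(\forall_L)$ the dual subtlety is that one must verify the witnessing term $t[\rho]$ denotes some $d\in D(w)$, so that the witness clause of Lemma~\ref{forcing_comparison} (respectively Lemma~\ref{cbn-to-cbv}(\ref{cbn-all})) applies; this is exactly where the standing assumption that $\rho$ takes its values in $D(w)$ is used. Keeping the substitutions straight --- identifying $A(t)[\rho]$ with the appropriate instance of $A[\rho]$ and tracking domains as one moves up the order --- is the main obstacle, and it is precisely the point at which Lemma~\ref{fresh} and the monotonicity of $D$ pay off.
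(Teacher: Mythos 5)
Your proposal is correct and takes essentially the same route as the paper's proof: a simultaneous induction on \LKMMT{} derivations in which the right rules are discharged by Lemma~\ref{forcing_comparison} (and Lemma~\ref{top-bottom}), the left rules by the strong-refutation clauses of Definition~\ref{def-composite} together with Lemma~\ref{srefutes_refutes}, the structural rules \MU, \MUT, \CUT{} by unfolding the definitions of forcing and refutation, and the eigenvariable rules by extending $\rho$ to $\rho(x\mapsto d)$ and using freshness, exactly as in the paper's $(\exists_L)$ case. One small correction: your closing appeal to Lemma~\ref{fresh} is spurious --- that is a purely syntactic lemma about \LKMMT{} derivations needed later for cut-free completeness, and it plays no role in soundness, where the freshness bookkeeping is fully handled by the identities $\Gamma[\rho(x\mapsto d)]=\Gamma[\rho]$ and $\Delta[\rho(x\mapsto d)]=\Delta[\rho]$ that you already state.
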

\begin{proof} One proves easily the three statements simultaneously by
  induction on the derivations. We demonstrate two non-trivial
  cases. Suppose $w :\Vdash \Gamma [\rho]$ and $w :\Delta[\rho]
  \Vdash$.

  \begin{itemize}
  \item Case $(\lor_L)$: Suppose $w' \ge w$ and $w' :\Vdash A[\rho]
    \lor B[\rho]$. We have to show $w'$ is exploding. But this follows
    from the fact that $w' : A[\rho] \lor B[\rho] \sVd$. Note just
    that $w' : A[\rho]\Vdash$ and $w' :B [\rho]\Vdash$ follow from the
    I.H. using monotonicity.

  \item Case $(\exists_L)$: Suppose $w' \ge w$ and $w' :\Vdash
    (\exists x. A)[\rho]$. We have to show $w'$ is exploding. For this
    it suffices to show $w' : (\exists x. A(x)) [\rho] \sVd$, i.e.,
    $w'' : A[\rho(x\mapsto d)]) \Vdash$ for all $w'' \ge w'$ and $d\in D(w'')$. Note first that $w'' :\Vdash \Gamma [\rho(x\mapsto d)]$ and $w''
    :\Delta[\rho(x\mapsto d)] \Vdash$ by monotonicity because of the
    freshness of $x$. By I.H. the claim follows.
  \end{itemize}
\end{proof}

\section{Completeness}\label{completeness}

As usual when constructively proving completeness of Kripke semantics
for a fragment\footnote{As previously remarked, there is no
  constructive proof for full intuitionistic predicate logic.} of
intuitionistic logic \cite{cCoquand93, HerbelinLee, sozeaua}, we
define a special purpose model, called the \emph{universal model},
built from the deduction system itself. Once we show completeness for
this special model, completeness for any model follows
(Corollary~\ref{usual_completeness}).

\begin{definition}The \emph{Universal classical Kripke model}
  \Universal \ is obtained by setting:
  \begin{itemize}
  \item $K$ to the set of pairs $(\Gamma,\Delta)$ of contexts of
    \LKMMT;
  \item $(\Gamma,\Delta)\le(\Gamma',\Delta')$ iff both
    $\Gamma\subseteq\Gamma'$ and $\Delta\subseteq\Delta'$;
  \item $(\Gamma,\Delta):X\sVd$ iff the sequent $\Gamma|X\vdash\Delta$
    is provable without a cut in \LKMMT;
  \item $(\Gamma,\Delta):\Vdash_\bot$ iff the sequent
    $\Gamma\vdash\Delta$ is provable without a cut in \LKMMT;
  \item for any $w$, $D(w)$ is the set of closed terms of
    \LKMMT.
  \end{itemize}
  Note that the domain function $D$ is a constant function, while in
  the abstract definition of model we allow for non-constant domain
  functions because that allows building more counter-models in
  applications.
\end{definition}

Monotonicity of strong refutation on atoms follows from Lemma
\ref{weakening}.

\setcounter{equation}{0}

\begin{theorem}[Cut-Free Completeness for \Universal]\label{Ucompleteness}
  For any sentence $A$ and contexts of sentences $\Gamma$ and
  $\Delta$, the following hold in \Universal:
  \begin{eqnarray}
    (\Gamma,\Delta):\Vdash A & \Longrightarrow & \Gamma\vdash A|\Delta \label{forces}\\
    (\Gamma,\Delta):A\Vdash & \Longrightarrow & \Gamma|A\vdash \Delta\label{refutes}
  \end{eqnarray}
  Moreover, the derivations on the right-hand side of $(\ref{forces})$
  and $(\ref{refutes})$ are cut-free.
\end{theorem}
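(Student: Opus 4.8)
The plan is to prove the two implications simultaneously by induction on the complexity of the formula $A$. This is the standard "universal model" completeness argument, and the key feature is that forcing and refutation in $\mathcal{U}$ are defined in terms of cut-free provability, so the inductive hypothesis directly supplies cut-free derivations which I then assemble using the introduction rules of \LKMMT.

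First I would unfold the definitions at the atomic base case. For an atom $X$, the statement $(\Gamma,\Delta):X\sVd$ literally means $\Gamma|X\vdash\Delta$ is cut-free provable, so the refutation clause $(\ref{refutes})$ is almost immediate once I relate non-strong refutation to strong refutation; and $(\Gamma,\Delta):\Vdash X$ must be pushed to a world $(\Gamma',\Delta')=(\Gamma, (\Delta,X))\ge(\Gamma,\Delta)$ where $X$ is strongly refuted by \AXL, forcing that world to explode, i.e. $\Gamma\vdash X,\Delta$ cut-free provable, from which $(\ref{forces})$ follows by $(\mu)$. The crucial technical manoeuvre, used throughout, is: to extract provability from a forcing/refutation hypothesis, apply it at a cleverly chosen larger world where the distinguished formula sits in the appropriate context, so that the relevant axiom rule makes that world exploding by fiat; the definition of forcing then yields cut-free provability of the original sequent. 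I would establish this as a reusable sub-lemma (e.g. $(\Gamma,\Delta):\Vdash A$ implies $\Gamma\vdash A,\Delta$ is cut-free provable, and dually) before the main induction.

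For the inductive step I would go connective by connective, in each case using the characterisations already proved — particularly Lemma~\ref{cbn-to-cbv}, Lemma~\ref{forcing_comparison}, and the definitions in Definition~\ref{def-composite} — to reduce forcing/refutation of a composite formula to forcing/refutation of its immediate subformulae, invoke the I.H. to get cut-free derivations of the premises, and close with the corresponding right- or left-introduction rule. For instance, for $A\to B$ the refutation side uses that $(\Gamma,\Delta):A\to B\sVd$ unpacks to $(\Gamma,\Delta):\Vdash A$ and $(\Gamma,\Delta):B\Vdash$, feeding the I.H. into $(\to_L)$; the forcing side goes through the world $(\Gamma, A), \Delta$ to obtain $\Gamma, A\vdash B|\Delta$ and then applies $(\to_R)$. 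The quantifier cases $\forall$ and $\exists$ are where I expect the main obstacle: the introduction rules carry freshness side-conditions, so after instantiating the forcing hypothesis at a world with a fresh constant $c\in D(w)$ in the domain, I must convert a derivation mentioning $c$ into one with a genuinely fresh eigenvariable. This is exactly what Lemma~\ref{fresh} (\emph{Fresh constants}) is designed for, and the remark that none of the weakening/freshness lemmas introduce new cuts is what guarantees the assembled derivation stays cut-free.

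The cut-freeness claim threads through every step essentially for free, \emph{provided} I am disciplined: the I.H. delivers cut-free derivations, every logical rule I apply is an introduction rule, and the only structural manipulations (weakening to move to a larger world, renaming a fresh constant to a variable) are cut-free by Lemmas~\ref{weakening}–\ref{fresh}. So the genuine content is the quantifier bookkeeping and the correct choice of enlarged world at each "explode this node" step; once the sub-lemma extracting provability from forcing/refutation is in place, the remaining cases are routine unfoldings. I would therefore write the sub-lemma and the $\to$, $\vee$, $\forall$, $\exists$ cases in full and dispatch $\wedge$, $\bot$, $\top$ as analogous.
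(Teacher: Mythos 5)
Your overall architecture — simultaneous induction on the complexity of $A$ for the two statements, the ``enlarge the world so the formula sits in the appropriate context'' manoeuvre, the use of Lemmas~\ref{cbn-to-cbv} and \ref{forcing_comparison} to unpack composite formulae, and Lemma~\ref{fresh} for the eigenvariable conditions — is exactly the paper's proof, and most of your per-case sketches are correct. But there is one genuine structural flaw: the ``reusable sub-lemma'' you propose to establish \emph{before} the main induction (that $(\Gamma,\Delta):\Vdash A$ implies $\Gamma\vdash A,\Delta$ cut-free, and dually) cannot be proved in advance, because it \emph{is} the theorem: from $\Gamma\vdash A,\Delta$ the sequent $\Gamma\vdash A|\Delta$ follows by a single \MU\ inference, and dually with \MUT. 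The obstruction to proving it directly is that, to use the hypothesis $(\Gamma,\Delta):\Vdash A$, you must exhibit a world above $(\Gamma,\Delta)$ that \emph{strongly refutes} $A$; an axiom rule makes this happen ``by fiat'' only when $A$ is atomic. When $A$ is composite, strong refutation at the enlarged world unfolds, via Definition~\ref{def-composite}, into forcing and refutation of the immediate subformulae, and turning those semantic facts into explosions (cut-free provability of $\Gamma'\vdash\Delta'$) requires precisely the induction hypotheses for the subformulae together with the derived contractions $(Contr_L)$, $(Contr_R)$. So the extraction step is not a lemma prior to the induction; it is the body of each inductive case, which is how the paper organises it. (The paper's Corollary~\ref{condition}, which is the closest thing to your sub-lemma, is accordingly derived \emph{after} Theorem~\ref{Ucompleteness}, not before.) Once you inline the extraction into each case, your sketches for the atomic, $\to$, $\vee$, $\forall$ and $\exists$ cases coincide with the paper's argument.

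Two smaller points. First, in the forcing case for implication the enlarged world must be $((\Gamma,A),(B,\Delta))$, not $((\Gamma,A),\Delta)$: without $B$ in the right-hand context you cannot show that the world refutes $B$, hence cannot show that it strongly refutes $A\to B$, and the forcing hypothesis then yields nothing; the paper obtains $\Gamma,A\vdash B,\Delta$ at that world and closes with \MU\ and \IMPR. Second, your cut-freeness bookkeeping lists only introduction rules and the structural Lemmas~\ref{weakening}--\ref{fresh}, but every ``explode this node'' step also uses $(Contr_L)$ or $(Contr_R)$, which are cuts against axioms; these are admissible only because the paper's notion of ``cut-free'' is that of normal proofs in \LKMMT, where cuts of an axiom against an introduction rule are permitted, and this should be said explicitly when threading cut-freeness through the induction.
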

\begin{proof} 
  We proceed by simultaneously proving the two statements by induction
  on the complexity of $A$. When quantifiers are concerned, $A(t)$ has
  lower complexity than $\exists x. A(x)$ and $\forall x. A(x)$.

  The derivation trees in this proof use meta-rules (*) and multi-step
  derivations ($Contr_L, Contr_L$) in addition to the derivation rules
  of the calculus from Table~\ref{LKmmt} in order to make the proofs
  easier to read.

  We also remind the reader that the notion of cut-freeness is the one
  of \LKMMT, introduced in the previous section.
  
  \renewcommand{\labelenumi}{(\arabic{enumi}) }
  \paragraph{Base case for atomic formulae}
  In the base case we have forcing and refutation on atomic sentences,
  which by definition reduce to strong refutation on atomic sentences,
  which by definition reduces just to statements about the deductions
  in {\LKMMT}.
  \begin{enumerate}
  \item[$(\ref{forces})$] Suppose
    \[
    \forall (\Gamma',\Delta')\ge (\Gamma,\Delta),
    \{\Gamma'|X\vdash\Delta' \Longrightarrow \Gamma'\vdash\Delta'\}
    \tag{*} \] where the RHS is cut-free. Then the following holds for
    $\Gamma' = \Gamma$ and $\Delta' = X, \Delta$:
    \begin{prooftree}
      \axc{}
      \uic{\Gamma|X\vdash X,\Delta}{\AXL}
      \uic{\Gamma\vdash X,\Delta}{(*)}
      \uic{\Gamma\vdash X|\Delta}{\MU}
    \end{prooftree}
    
  \item[$(\ref{refutes})$] Suppose $(\Gamma,\Delta):X\Vdash$, i.e.,
    \begin{equation*}\tag{*}
      \forall(\Gamma',\Delta')\ge (\Gamma,\Delta),
      \left\{(\Gamma',\Delta') :\Vdash X \Tocom  \Gamma'\vdash\Delta'\right\}
    \end{equation*}
    We use $(\ast)$ to prove $\Gamma, X \vdash \Delta$ without
    introducing a cut from which the claim follows by the
    \MUT-rule. For this, we need to show $((\Gamma, X), \Delta) :
    \Vdash X$. Assume $(\Gamma'', \Delta'') \ge ((\Gamma,X),\Delta)$
    such that there is a cut-free proof for $\Gamma'' \mid X \vdash
    \Delta''$. Then by $(Contr_L)$, $\Gamma'' \vdash \Delta''$, that
    is, $(\Gamma'', \Delta'')$ is exploding.
  
  \end{enumerate}

  \paragraph{Base cases for $\top$ and $\bot$} Obvious.

  \paragraph{Induction case for implication}
  \begin{enumerate}
  \item[$(\ref{forces})$] Suppose $(\Gamma,\Delta): \Vdash A_1\to
    A_2$, i.e.,
    \begin{equation*}\tag{*}
      \forall(\Gamma',\Delta')\ge (\Gamma,\Delta),\{
      (\Gamma',\Delta'): A_1 \to A_2 \sVd\Tocom
      \Gamma'\vdash\Delta'\}
    \end{equation*}

    We use $(*)$ to prove $\Gamma, A_1 \vdash A_2, \Delta$ without
    introducing a cut from which the claim follows by the \MU~ and
    \IMPR~ rules. We need to show $((\Gamma, A_1), (A_2,\Delta)) : A_1
    \to A_2 \sVd$, i.e. $((\Gamma, A_1), (A_2,\Delta)) :\Vdash A_1$
    and $((\Gamma, A_1), (A_2,\Delta)) :A_2 \Vdash$. We show the first
    one. The second case is similar.\smallskip

    Assume $(\Gamma',\Delta') \ge ((\Gamma,A_1), (\Delta,A_2))$ such
    that $(\Gamma', \Delta') : A_1\sVd$. Using the induction
    hypothesis we get the following cut-free proof:
  \begin{prooftree}
    \axc{\Gamma' \mid A_1 \vdash \Delta'}
    \uic{\Gamma'\vdash \Delta'}{$(Contr_L)$}
  \end{prooftree}
  That is, $(\Gamma', \Delta')$ is exploding.

\item[$(\ref{refutes})$] Suppose $(\Gamma,\Delta):A_1\to A_2\Vdash$,
  i.e.,
  \begin{equation*}\tag{*}
    \forall(\Gamma',\Delta')\ge (\Gamma,\Delta),\{
    (\Gamma',\Delta'):\Vdash A_1 \to A_2 \Tocom
    \Gamma'\vdash\Delta'\}
  \end{equation*}

  We use $(*)$ to prove $\Gamma, A_1\to A_2 \vdash \Delta$ without
  introducing a cut from which the claim follows by the \MUT-rule. We
  need to show $((\Gamma,A_1\to A_2), \Delta) :\Vdash A_1 \to
  A_2$. Assume $(\Gamma'',\Delta'') \ge ((\Gamma, A_1\to A_2),
  \Delta)$ such that $(\Gamma'', \Delta'') \Vdash A_1$ and $(\Gamma'',
  \Delta'') : A_2 \Vdash$. Then, using the induction hypotheses we
  have the following cut-free proof:
  \begin{prooftree}
    \axc{\Gamma'' \vdash A_1 \mid \Delta''}
    \axc{\Gamma''\mid A_2\vdash \Delta''}
    \bic{\Gamma''\mid A_1\to A_2\vdash \Delta''}{$(\to_L)$}
    \uic{\Gamma''\vdash \Delta''}{$(Contr_L)$}
  \end{prooftree}
  That is, $(\Gamma'', \Delta'')$ is exploding.

\end{enumerate}

\paragraph{Induction case for $\vee$}
\begin{enumerate}
\item[$(\ref{forces})$] Suppose $(\Gamma,\Delta):\Vdash A_1\vee A_2$,
  i.e.,
  \begin{equation*}\tag{*}
    \forall (\Gamma',\Delta')\ge (\Gamma,\Delta),
    \{(\Gamma',\Delta') : A_1 \lor A_2 \sVd \Tocom
    (\Gamma',\Delta') \explodes\}
  \end{equation*}
  First we use $(\ast)$ to show $\Gamma \vdash A_1, A_2, A_1 \lor A_2,
  \Delta$ without introducing a cut. For this we set $\Gamma'= \Gamma$ and
  $\Delta' = A_1, A_2, A_1 \lor A_2, \Delta$, that is, we need to show
  $(\Gamma', \Delta') : A_i \Vdash$ for $i = 1, 2$. Assume $(\Gamma'',
  \Delta'') \ge (\Gamma', \Delta')$ such that $(\Gamma'', \Delta'')
  :\Vdash A_i$, then by induction hypotheses $\Gamma'' \vdash A_i \mid
  \Delta''$. Therefore, by $(Contr_R)$, $(\Gamma'', \Delta'')$ is
  exploding.
  
  Now we can prove the claim.
  \begin{prooftree}
    \axc{\Gamma\vdash A_2,A_1,A_1\vee A_2,\Delta}
    \uic{\Gamma\vdash A_2|A_1,A_1\vee A_2,\Delta}{\MU}
    \uic{\Gamma\vdash A_1\vee A_2|A_1,A_1\vee A_2,\Delta}{$(\vee^2_L)$}
    \uic{\Gamma\vdash A_1,A_1\vee A_2,\Delta}{$(Contr_R)$}
    \uic{\Gamma\vdash A_1|A_1\vee A_2,\Delta}{\MU}
    \uic{\Gamma\vdash A_1\vee A_2|A_1\vee A_2,\Delta}{$(\vee^1_L)$}
    \uic{\Gamma\vdash A_1\vee A_2,\Delta}{$(Contr_R)$}
    \uic{\Gamma\vdash A_1\vee A_2|\Delta}{\MU}
  \end{prooftree}

\item[$(\ref{refutes})$] The claim follows directly from the
  \DISJL-rule and the induction hypothesis because $(\Gamma,\Delta) :
  A_1 \lor A_2 \Vdash$ implies both $(\Gamma,\Delta) : A_1\Vdash$ and
  $(\Gamma,\Delta) : A_2 \Vdash$ by Lemma \ref{cbn-to-cbv}, which does
  not need to introduce new cuts.

\end{enumerate}

\paragraph{Induction case for $\wedge$}
\begin{enumerate}
\item[$(\ref{forces})$] The claim follows directly from the
  \CONJR-rule and the induction hypotheses because $(\Gamma,\Delta) :
  \Vdash A_1 \land A_2$ implies both $(\Gamma,\Delta) :\Vdash A_1$ and
  $(\Gamma,\Delta) :\Vdash A_2$, by Lemma \ref{forcing_comparison},
  which does not need to intruduce new cuts.

\item[$(\ref{refutes})$] Suppose $(\Gamma,\Delta): A_1\land A_2
  \Vdash$, i.e.,
  \begin{equation*}\tag{*}
    \forall (\Gamma',\Delta')\ge (\Gamma,\Delta),
    \{(\Gamma',\Delta') :\Vdash A_1 \land A_2  \Tocom
    (\Gamma',\Delta') \explodes\}
  \end{equation*}
  We use $(\ast)$ to show $\Gamma, A_1 \land A_2 \vdash \Delta$
  without introducing a cut from which the claim follows by the
  \MUT-rule.  By Lemma~\ref{forcing_comparison}, we need to show
  $((\Gamma,A_1\land A_2), \Delta) :\Vdash A_i$ for $i = 1, 2$. Assume
  $(\Gamma'', \Delta'') \ge ((\Gamma, A_1\land A_2), \Delta)$ such
  that $(\Gamma'',\Delta'') : A_i \sVd$. Using induction hypotheses we
  get the following cut-free proof:
  \begin{prooftree}
    \axc{\Gamma'' \mid A_i \vdash \Delta''}
    \uic{\Gamma'' \mid A_1 \land A_2 \vdash \Delta''}{$(\land_L^i)$}
    \uic{\Gamma'' \vdash \Delta''}{$(Contr_L)$}
  \end{prooftree}
  Therefore, $(\Gamma'', \Delta'')$ is exploding.
    
  \end{enumerate}

  \paragraph{Induction case for $\forall$}
  \begin{enumerate}
  \item[$(\ref{forces})$] Assume $(\Gamma,\Delta) :\Vdash \forall
    x. A(x)$. Then, by Lemma~\ref{forcing_comparison},
    $(\Gamma,\Delta) :\Vdash A(t)$ for all closed terms. In
    particular, we have $(\Gamma,\Delta) :\Vdash A(c)$ for some fresh
    constant $c$ which does not occur in $\Gamma,\Delta,A$. Using the
    induction hypothesis we get a cut-free proof of $\Gamma \vdash
    A(c) \mid \Delta$. By Lemma \ref{fresh}, this implies a cut-free
    proof of $\Gamma \vdash A(x) \mid \Delta$ for any fresh variable
    $x$, so the claim follows.

  \item[$(\ref{refutes})$] Suppose $(\Gamma,\Delta): \forall x. A(x)
    \Vdash$, i.e.,
    \begin{equation*}\tag{*}
      \forall (\Gamma',\Delta')\ge (\Gamma,\Delta),
      \{(\Gamma',\Delta') :\Vdash \forall x. A(x) \Tocom
      (\Gamma',\Delta') \explodes\}
    \end{equation*}
    We use $(\ast)$ to show $\Gamma, \forall x. A(x) \vdash \Delta$
    without introducing a cut from which the claim follows by the
    \MUT-rule, that is, we need to show $((\Gamma,\forall x. A(x)),
    \Delta) :\Vdash A(t)$ for any closed term $t$. Assume $(\Gamma'',
    \Delta'') \ge ((\Gamma,\forall x. A(x)), \Delta)$ such that
    $(\Gamma'',\Delta'') : A(t) \sVd$. Using the induction hypothesis we
    get the following cut-free proof:
  \begin{prooftree}
    \axc{\Gamma'' \mid A(t) \vdash \Delta''}
    \uic{\Gamma'' \mid \forall x. A(x) \vdash \Delta''}{$(\forall_L)$}
    \uic{\Gamma'' \vdash \Delta''}{$(Contr_L)$}
  \end{prooftree}
  Therefore, $(\Gamma'', \Delta'')$ is exploding.

  \end{enumerate}

  \paragraph{Induction case for $\exists$}
  \begin{enumerate}
  \item[$(\ref{forces})$] Suppose $(\Gamma,\Delta):\Vdash \exists
    x. A(x)$, i.e.,
  \begin{equation*}\tag{*}
    \forall (\Gamma',\Delta')\ge (\Gamma,\Delta),
    \{(\Gamma',\Delta') : \exists x. A(x) \sVd \Tocom
    (\Gamma',\Delta') \explodes\}
  \end{equation*}
  We use $(\ast)$ to show $\Gamma \vdash \exists x. A(x), \Delta$
  without introducing a cut from which the claim follows using the
  \MU-rule. We need to show $(\Gamma, (\Delta, \exists x. A(x))) :
  A(t) \Vdash$ for any closed term $t$.

  Assume $(\Gamma'', \Delta'') \ge (\Gamma, (\Delta, \exists x.A(x)))$
  such that $(\Gamma'', \Delta'') :\Vdash A(t)$. Using the induction
  hypothesis we get the following cut-free proof:
  \begin{prooftree}
    \axc{\Gamma'' \vdash A(t) \mid \Delta''}
    \uic{\Gamma'' \vdash \exists x. A(x) \mid \Delta''}{$(\exists_R)$}
    \uic{\Gamma'' \vdash \Delta''}{$(Contr_R)$}
  \end{prooftree}
  Therefore, $(\Gamma'', \Delta'')$ is exploding.

\item[$(\ref{refutes})$] Assume $(\Gamma,\Delta) : \exists x. A(x)
  \Vdash$, then $(\Gamma,\Delta) : \exists x. A(x) \sVd$ by
  Lemma~\ref{cbn-to-cbv}. That is, $(\Gamma,\Delta) : A(t)\Vdash$ for
  all closed terms. In particular, we have $(\Gamma,\Delta) : A(c)
  \Vdash$ for some fresh constant $c$ which does not occur in
  $\Gamma,\Delta,A$. Using induction hypotheses we have a cut-free
  proof of $\Gamma \mid A(c) \vdash \Delta$. By Lemma \ref{fresh},
  this implies a cut-free proof of $\Gamma \mid A(x) \vdash \Delta$
  for any fresh variable, so the claim follows.

\end{enumerate}

\end{proof}

\begin{corollary}\label{condition}
  For any sentence $A$ and contexts of sentences $\Gamma, \Delta$, the
  following hold in \Universal:
  \begin{enumerate}
  \item If $A \in \Gamma$ then $(\Gamma,\Delta):\Vdash A$.\label{nt1}

  \item If $B \in \Delta$ then $(\Gamma,\Delta): B \Vdash$.\label{nt2}
  \end{enumerate}
\end{corollary}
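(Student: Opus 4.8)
The plan is to read off both items directly from the Cut-Free Completeness Theorem (Theorem~\ref{Ucompleteness}), together with the derived contraction rules $(Contr_L)$ and $(Contr_R)$, after unfolding the definitions of forcing and refutation in \Universal. Both statements have the same shape: assuming membership in the distinguished context, I must show that every world $(\Gamma',\Delta')\ge(\Gamma,\Delta)$ witnessing the relevant semantic hypothesis is exploding, i.e. that the undistinguished sequent $\Gamma'\vdash\Delta'$ is cut-free provable in \LKMMT.

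For item~(\ref{nt1}), suppose $A\in\Gamma$. Unfolding the definition of forcing, it suffices to show that any $(\Gamma',\Delta')\ge(\Gamma,\Delta)$ with $(\Gamma',\Delta'):A\sVd$ is exploding. By Lemma~\ref{srefutes_refutes}, strong refutation implies refutation, so $(\Gamma',\Delta'):A\Vdash$, and then statement~(\ref{refutes}) of Theorem~\ref{Ucompleteness} supplies a cut-free derivation of $\Gamma'\mid A\vdash\Delta'$. Since $\Gamma\subseteq\Gamma'$ gives $A\in\Gamma'$, applying $(Contr_L)$ to this derivation produces a cut-free proof of $\Gamma'\vdash\Delta'$, i.e. $(\Gamma',\Delta')\explodes$. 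As $(\Gamma',\Delta')$ was an arbitrary strongly refuting extension, this gives $(\Gamma,\Delta):\Vdash A$.

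For item~(\ref{nt2}), suppose $B\in\Delta$. Unfolding refutation, it suffices to show that any $(\Gamma',\Delta')\ge(\Gamma,\Delta)$ with $(\Gamma',\Delta'):\Vdash B$ is exploding. Here statement~(\ref{forces}) of Theorem~\ref{Ucompleteness} applies directly, yielding a cut-free derivation of $\Gamma'\vdash B\mid\Delta'$. Since $\Delta\subseteq\Delta'$ gives $B\in\Delta'$, applying $(Contr_R)$ produces a cut-free proof of $\Gamma'\vdash\Delta'$, i.e. $(\Gamma',\Delta')\explodes$, whence $(\Gamma,\Delta):B\Vdash$.

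There is no serious obstacle: the corollary is essentially immediate once completeness is available, and the argument mirrors exactly the atomic base cases $(\ref{forces})$ and $(\ref{refutes})$ in the proof of Theorem~\ref{Ucompleteness}. The two points worth checking are, first, the detour through Lemma~\ref{srefutes_refutes} in item~(\ref{nt1}), which is needed because the forcing definition quantifies over \emph{strongly} refuting worlds while the completeness theorem is stated for (non-strong) refutation; and second, that the contraction steps introduce no genuine cuts beyond the admissible cut-against-axiom form, so that the resulting proofs of $\Gamma'\vdash\Delta'$ remain cut-free in the sense of \LKMMT. The latter is guaranteed by the explicit derivations of $(Contr_L)$ and $(Contr_R)$ given in Section~\ref{soundness}.
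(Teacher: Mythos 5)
Your proof is correct and follows essentially the same route as the paper's: unfold the definitions of forcing and refutation in \Universal, apply statements (\ref{forces}) and (\ref{refutes}) of Theorem~\ref{Ucompleteness} at an arbitrary extension $(\Gamma',\Delta')$, and close with $(Contr_L)$ resp.\ $(Contr_R)$ to show that world is exploding. Your explicit appeal to Lemma~\ref{srefutes_refutes} in item~(\ref{nt1}) is a step the paper leaves implicit, but it is the same argument.
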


\begin{proof} 
  \begin{enumerate}
  \item Assume $A \in \Gamma$, $(\Gamma',\Delta') \ge (\Gamma,\Delta)$
    and $(\Gamma', \Delta') : A \sVd$. Then by Theorem
    \ref{Ucompleteness}, $\Gamma' \mid A \vdash \Delta'$, so we obtain
    a cut-free proof for $\Gamma' \vdash \Delta'$ using
    $(Contr_L)$. That is, $(\Gamma',\Delta')$ is exploding.

  \item Assume $B \in \Delta$, $(\Gamma',\Delta') \ge (\Gamma,\Delta)$
    and $(\Gamma', \Delta') :\Vdash B$. Then by Theorem
    \ref{Ucompleteness}, $\Gamma' \vdash B \mid \Delta'$, so we obtain
    a cut-free proof for $\Gamma' \vdash \Delta'$ using
    $(Contr_R)$. That is, $(\Gamma',\Delta')$ is exploding.
 \end{enumerate}
\end{proof}

\begin{corollary}[Completeness of Classical Logic]\label{usual_completeness}
  If in every Kripke model, at every possible world, the sentence $A$
  is forced whenever all the sentences of $\Gamma$ are forced and all
  the sentences of $\Delta$ are refuted, then there exists a cut-free
  derivation in {\em \LKMMT} of the sequent $\Gamma\vdash A|\Delta$.
\end{corollary}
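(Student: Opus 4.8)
The plan is to instantiate the hypothesis at the one model that embodies provability itself, the universal model \Universal, and at the one world whose data is precisely the sequent to be derived. Because the hypothesis is a statement about \emph{every} classical Kripke model and \emph{every} world, I may specialise it to \Universal\ taken at the world $w=(\Gamma,\Delta)$; the whole argument then reduces to checking that the premises of the hypothesis are met there and reading off the conclusion through cut-free completeness.

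First I would verify the two premises at $(\Gamma,\Delta)$ in \Universal. These are exactly supplied by Corollary~\ref{condition}: part~(\ref{nt1}) gives $(\Gamma,\Delta):\Vdash A'$ for every $A'\in\Gamma$, i.e. $(\Gamma,\Delta):\Vdash\Gamma$, and part~(\ref{nt2}) gives $(\Gamma,\Delta):B\Vdash$ for every $B\in\Delta$, i.e. $(\Gamma,\Delta):\Delta\Vdash$. Hence all sentences of $\Gamma$ are forced and all sentences of $\Delta$ are refuted at this world, which is exactly what the hypothesis requires.

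Applying the hypothesis to \Universal\ at $(\Gamma,\Delta)$ therefore yields $(\Gamma,\Delta):\Vdash A$. Finally I would feed this into implication~(\ref{forces}) of Theorem~\ref{Ucompleteness}, the cut-free completeness of the universal model, which turns $(\Gamma,\Delta):\Vdash A$ into a cut-free derivation of $\Gamma\vdash A\mid\Delta$ in \LKMMT. That is precisely the required conclusion, so the corollary follows.

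I do not expect a genuine obstacle: the deductive heavy lifting has already been done in Theorem~\ref{Ucompleteness} and Corollary~\ref{condition}, and this statement is only the standard passage from completeness for the universal model to completeness for the class of all models. The one point deserving attention is that the argument remain constructive --- and it does, since specialising a universally quantified hypothesis to a named model and a named world uses no classical principle, and since $A$, $\Gamma$, $\Delta$ are sentences, so that the ``forces all of $\Gamma$ / refutes all of $\Delta$'' conditions line up verbatim with Corollary~\ref{condition} without any detour through the association machinery of the soundness proof.
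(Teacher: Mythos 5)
Your proof is correct and is exactly the paper's own argument, just spelled out in more detail: specialise the hypothesis to the universal model \Universal{} at the world $(\Gamma,\Delta)$, verify its premises via Corollary~\ref{condition}, and convert the resulting $(\Gamma,\Delta):\Vdash A$ into a cut-free derivation of $\Gamma\vdash A\mid\Delta$ by implication~(\ref{forces}) of Theorem~\ref{Ucompleteness}. No gaps; the remark about constructivity is also consistent with the paper's setting.
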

\begin{proof}
  If the hypothesis holds for any Kripke model, so does it hold for
  \Universal. Theorem \ref{Ucompleteness} and Corollary
  \ref{condition} lead to the claim, since $(\Gamma,\Delta):\Vdash
  \Gamma$ and $(\Gamma,\Delta): \Delta \Vdash$.
\end{proof}

\begin{remark}\label{or-exists2}
  The following are false, even if reasoning classically.
\begin{itemize}
\item $w :\Vdash A \lor B$ implies $w:\Vdash A$ or $w :\Vdash B$.

\item $w:\Vdash \exists x.A(x)$ implies $w:\Vdash A(d)$ for some $d\in
  D(w)$.
\end{itemize}
Because of the completeness of classical logic with respect to the
universal model, the claims correspond to Disjunction property (DP)
and Explicit definability property (ED), respectively, which are in
general not true in classical logic.
\end{remark}

A constructive cut-free completeness theorem can also be used for
proof normalisation.

\begin{corollary}[Semantic Cut-Elimination]\label{nbe}
  For all contexts $\Gamma, \Delta$ of sentences, if there is a
  derivation of $\Gamma\vdash\Delta$, then there is a cut-free
  derivation of $\Gamma\vdash\Delta$.
\end{corollary}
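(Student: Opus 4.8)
The plan is to obtain cut-elimination \emph{semantically}, by evaluating the given (possibly cut-containing) derivation in the universal model \Universal{} and then reading off a cut-free proof directly from the definition of the exploding relation. The key observation is that in \Universal{} the relation $\explodes$ at the world $(\Gamma,\Delta)$ holds \emph{precisely} when $\Gamma\vdash\Delta$ has a cut-free proof; so the entire task reduces to forcing that single node to explode.

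First I would fix the world $w := (\Gamma,\Delta)$ of \Universal{} together with an arbitrary association $\rho$. Since $\Gamma$ and $\Delta$ consist of \emph{sentences}, substitution is inert, i.e.\ $A[\rho]=A$ for every formula involved, so the forcing and refutation hypotheses needed below reduce to statements about $\Gamma$ and $\Delta$ themselves. By Corollary~\ref{condition}, every $A\in\Gamma$ satisfies $w:\Vdash A$ and every $B\in\Delta$ satisfies $w:B\Vdash$; that is, $w:\Vdash\Gamma$ and $w:\Delta\Vdash$ in exactly the sense used by the Soundness Theorem.

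Next I would feed the given derivation of $\Gamma\vdash\Delta$ into the Soundness Theorem. Its first clause asserts that whenever $\Gamma\vdash\Delta$ is derivable, $w:\Vdash\Gamma[\rho]$ and $w:\Delta[\rho]\Vdash$, then $w\explodes$. Having just verified the two hypotheses, I conclude $(\Gamma,\Delta)\explodes$. By the very definition of $\explodes$ in \Universal{}, this is the assertion that $\Gamma\vdash\Delta$ is provable \emph{without a cut}, which is exactly the desired conclusion.

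The proof has essentially no moving parts of its own: all the work has already been done in the Soundness Theorem (which interprets the cut rule constructively) and in Theorem~\ref{Ucompleteness} together with Corollary~\ref{condition} (which build \Universal{} so that explosion coincides with cut-free provability). The only point demanding care---and the closest thing to an obstacle---is the bookkeeping about associations: one must check that applying $\rho$ leaves the sentences in $\Gamma,\Delta$ unchanged, so that the forcing/refutation facts delivered by Corollary~\ref{condition} are literally the ones the Soundness Theorem consumes. Because every step is constructive, the passage from the input derivation to the output cut-free derivation is effective, which is what justifies viewing this as a normalisation-by-evaluation argument rather than a mere existence statement.
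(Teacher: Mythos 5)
Your proof is correct and takes essentially the same route as the paper's own: both evaluate the given derivation through the Soundness Theorem at the world $(\Gamma,\Delta)$ of \Universal{}, use Corollary~\ref{condition} to discharge the forcing and refutation hypotheses, and read off the cut-free derivation from the definition of $\explodes$ in the universal model. The only difference is that you spell out the bookkeeping that associations act trivially on sentences, which the paper leaves implicit.
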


\begin{proof}
  From the hypothesis $\Gamma\vdash\Delta$, the soundness theorem
  applied to \Universal\ gives us that there is indeed a cut-free
  derivation for $\Gamma\vdash\Delta$ because the world
  $(\Gamma,\Delta)$ forces all formulae of $\Gamma$ and refutes all
  formulae of $\Delta$ as shown in Corollary \ref{condition}.
\end{proof}

\section{Discussion, Related and Future Work}

\subsection{Normalisation by Evaluation}
The last corollary
is at the origin of our work, where we wanted to do a
normalisation-by-evaluation (NBE) proof for computational classical
logic.  The general idea of the NBE method is to use an ``evaluation''
(soundness) function from the object-language to a constructive
meta-language and then use a ``reification'' (completeness) function
from the meta-language back to the object-language. The interpretation
of the object-language inside the meta-language, that goes via
evaluation/soundness, is usually done using some form of Kripke
models.

So far, NBE has been used to show normalisation of various
intuitionistic proof systems
\cite{DBLP:conf/lics/BergerS91,DBLP:conf/popl/Danvy96,DBLP:conf/lics/AbelCD07,DBLP:conf/csl/Abel09,DBLP:journals/tcs/Okada02,sozeaua}
as well as purely computational calculi
\cite{DBLP:conf/fossacs/FilinskiR04}. One advantage of taking this
approach to that of studying a reduction relation for a proof calculus
for classical logic, explicitly as a rewrite system, is that one
circumvents both difficulties of rewrite systems and validating
equalities arising from $\eta$-conversion. For more details on these
difficulties the reader is referred to \cite{UrbanR06}, for classical
proof systems, and \cite{DBLP:journals/apal/FioreCB06} for
intuitionistic proof systems. Another advantage is that these kinds of
proofs manipulate finite structures only and avoid working with
saturated models as, for example, in \cite{Troelstra}. 

Note also that, although as output from the NBE algorithm we get a
$\beta$-reduced $\eta$-long normal form, we proved a weak NBE result,
as we did not prove that the output can be obtained from the input by
a number of rewrite steps, as it is done in \cite{cCoquand93}.

\subsection{Dual Notion of Model}

Thanks to the symmetry of the {\LKMMT} rules for left-distinguished and
right-distinguished formulae, it is possible to define a dual notion
of model in which:
\begin{itemize}
\item ``strong \emph{forcing}'' is taken as primitive and
  ``refutation'' and non-strong ``forcing'' are defined from it by
  orthogonality like in Definition~\ref{def-composite},
\item for the universal model, strong forcing is defined as cut-free
  provability of \emph{right}-distinguished formulae (instead of
  left-distinguished ones for strong refutation),
\end{itemize}
and prove, completely analogously to the proofs presented in this
paper, that we have the same soundness and completeness theorems
holding. 

The reader interested in the computational behaviour of the
completeness theorem, should look at its partial Coq
formalisation\cite{formalisation}. From that work it follows that the
NBE theorem computes the normal forms of proofs in call-by-name
discipline. We mention this work because we would like to conjecture
that the presented classical Kripke model always gives rise to
call-by-name behaviour for proof normalisation, while the dual notion
gives rise to call-by-value behaviour. As one of the referees
remarked, there is a variety of different strategies for doing proof
normalisation, of which call-by-name and call-by-value are the
simplest ones to describe, but also the most standard ones. For a
general study of cut-elimination strategies that are more complex than
call-by-name and call-by-value, the reader is referred to
\cite{danosjs}.

\subsection{Using Intuitionistic Kripke Models on Doubly-Negated
  Formulae}
Although one can define a double-negation interpretation $A^*$ of
formulae and use intuitionistic Kripke models and an intuitionistic
completeness theorem to obtain a normalisation result, one would have
to pass through the chain of inferences
\[
\vdash_c A \Longrightarrow ~\vdash_i A^* \Longrightarrow ~\Vdash_i A^*
\Longrightarrow ~\vdash^{\mathit{nf}}_i A^* \Longrightarrow
~\vdash^{\mathit{nf}}_c A
\]
where ``i'' stands for ``intuitionistic'', ``c'' for ``classical'' and
``nf'' for ``in normal form'', in which how to do the last inference
is not obvious. We consider that to be a \emph{detour} since we can
prove, simply, the chain of inferences
\[
\vdash_c A \Longrightarrow ~\Vdash_c A \Longrightarrow
~\vdash^{\mathit{nf}}_c A
\]

The interest in having a direct-style semantics for classical logic is
the same as the interest in having a proof calculus for classical
logic instead of restricting oneself to an intuitionistic calculus and
working with doubly-negated formulae; or, in the theory of programming
languages, to having a separate constant \emph{call-cc} instead of
writing all programs in continuation-passing style.

Avigad shows in \cite{DBLP:journals/jlp/Avigad01} how classical
cut-elimination is a special case of intuitionistic one, work which
resembles the first chain of inferences of this subsection. However,
his work is specialised to ``negative'' formulae, that is, it is not
clear how to extend it to formulae that use $\vee$ and $\exists$.

Finally, we remark that an interpretation through intuitionistic
Kripke models and a double-negation interpretation would have to be
done in Kripke models with exploding nodes, because of the
meta-mathematical results from \cite{Kreisel62, McCarty94, McCarty02}.

\subsection{Boolean vs. Kripke Semantics for Classical Logic}

We compare Boolean and Kripke semantics in a constructive setting,
based on our own observations (which we hope to submit for publication
soon) and based on a strand of works in mathematical logic from the
1960s.

\paragraph{Computational Behaviour} The only known constructive
completeness proof of classical logic with respect to Boolean models
is the one of Krivine\cite{Krivine96}, who used a double-negation
interpretation to translate G\"odel's original proof. Krivine's proof
was later reworked by Berardi and Valentini \cite{BerardiV04} to show
that its main ingredient is a constructive version of the ultra-filter
theorem for countable Boolean algebras. This theorem, however,
crucially relies on an enumeration of the members of the algebra (the
formulae).

In the work we mentioned as yet to be put into words, a formalisation
in constructive type theory of the proof of Berardi and Valentini, we
saw that, as a consequence of relying on the linear order, the
reduction relation for proof-terms corresponding to implicative
formulae is not $\beta$-reduction, but an ad hoc reduction relation
which depends on the particular way one defines the linear order
(enumeration of formulae). As a consequence, there is no clear notion
of normal form suggested by the ad hoc reduction relation. The
cut-free completeness theorem given in this paper, however, gives rise
to a normalisation algorithm which respects the $\beta$-reduction
relation of the object-language, when the Kripke models are
interpreted in a type theory which is based on $\beta$-reduction
itself. 

\paragraph{Expressiveness} We think of classical Kripke model validity
as being more expressive, i.e. containing more information, than
Boolean model validity. That is indicated by the presented
completeness theorem which is both simpler than (constructive)
completeness theorems for Boolean models, and manipulates finite
structures directly, instead of relying on structures built up by an
infinite saturation process.

Also, only after submitting the first version of the present text, we
became aware of the work done in the 1960s on using Kripke models to
do model theory of classical logic \cite{Fitting69}. Although
conducted in a \emph{classical} meta-language, the work indicates that
it is possible to use Kripke models to express elegantly some
cumbersome constructions of model theory, like set theoretic forcing
\cite{Dahn79,Fitting69}. Indeed, the connection between the two had
been spotted already by Kripke \cite{kripke63} and hence the term
``forcing'' appeared in Kripke semantics. We hope that looking at
those kind of constructions inside Kripke models, but this time inside
a \emph{constructive} meta-language, might be an interesting venue to
finding out the constructive content of techniques of classical model
theory.

In this respect, our work can also be seen as a contribution to the
field of constructive model theory of classical logic.

\bibliographystyle{plain} 
\bibliography{classical_kripke_rev3}
\end{document}